\definecolor{mycolor}{rgb}{0.89,0,0.87}
\theoremstyle{plain}
\newtheorem{theorem}{Theorem}[section]
\newtheorem{proposition}{Proposition}[section]
\newtheorem{lemma}{Lemma}[section]
\theoremstyle{definition}
\newtheorem{definition}{Definition}[section]
\theoremstyle{remark}
\newtheorem{remark}{Remark}[section]
\newtheorem{example}{Example}[section]
\newcommand{\End}{\hfill{\blacksquare}}
\newcommand{\prarrow}[2]{\ar@<0.5ex>[r]^-{#1} \ar@<-0.5ex>[r]_-{#2}}
\newcommand{\plarrow}[2]{\ar@<0.5ex>[l]^-{#1} \ar@<-0.5ex>[l]_-{#2}} 
\newcommand{\pdarrow}[2]{\ar@<0.5ex>[d]^-{#1} \ar@<-0.5ex>[d]_-{#2}} 
\newcommand{\puarrow}[2]{\ar@<0.5ex>[u]^-{#1} \ar@<-0.5ex>[u]_-{#2}}
\begin{document}

\title{Reduction of coK\"{a}hler and 3-cosymplectic manifolds}
\author{Shuhei Yonehara}
\address{Department of Mathematics, Graduate School of Science, Osaka University, Toyonaka, Osaka 560-0043, Japan}
\email{u074738h@ecs.osaka-u.ac.jp}

\begin{abstract}
    The notions of coK\"{a}hler  manifolds and 3-cosymplectic manifolds are odd-dimensional analogues of the ones of K\"{a}hler manifolds and hyperK\"{a}hler manifolds, respectively. In this paper, we obtain reduction theorems of coK\"{a}hler manifolds and 3-cosymplectic manifolds. We show that K\"{a}hler and coK\"{a}hler (hyperK\"{a}hler and 3-cosymplectic) reductions admit a natural compatibility with respect to ``cylinder constructions''. We further prove the compatibility of K\"{a}hler and coK\"{a}hler reductions with respect to the ``mapping torus construction''.
 \end{abstract}
  
\maketitle

\tableofcontents

\footnote[0]{2020 \textit{Mathematics Subject Classification}\ : 53D20\ (primary), 53D10, 53D15}

\section{Introduction}

Since the pioneering work of Marsden-Weinstein \cite{marsden1974reduction}, many types of reduction theorems have been studied for various geometric structures on manifolds (see the table below). 
\vspace{0.1in}
\begin{center}
    \begin{threeparttable}[htbp]
        \begin{tabular}{|c||c|c|}
            \hline
            Geometric structures&Structures with metric&3-structures with metric\\ 
          \hline
          \hline
          Symplectic\ \cite{marsden1974reduction}&K\"{a}hler\ \cite{hitchin1987hyperkahler}&HyperK\"{a}hler\ \cite{hitchin1987hyperkahler}\\ 
          \hline
          Contact \cite{albert1989theoreme}&Sasakian \cite{grantcharov2001reduction}&3-Sasakian \cite{boyer1994geometry}\\ 
          \hline
          Cosymplectic\ \cite{albert1989theoreme}&CoK\"{a}hler&3-cosymplectic\\ 
          \hline
        \end{tabular}
    \end{threeparttable}
\end{center}

\vspace{0.1in}
In \cite{hitchin1987hyperkahler}, Hitchin {\it et al} proved the reduction theorem of K\"{a}hler manifolds. They also introduced the notion of hyperK\"{a}hler moment maps and proved the reduction theorem of hyperK\"{a}hler manifolds. Albert \cite{albert1989theoreme} studied Hamiltonian actions on contact manifolds and cosymplectic manifolds and proved the reduction theorems. Afterwards, several types of reduction theorems of contact manifolds have been studied \cite{geiges1997constructions,willett2002contact,zambon2006contact}. In \cite{boyer1994geometry}, Boyer {\it et al} proved the reduction theorem of 3-Sasakian manifolds via the hyperK\"{a}hler reduction theorem. Afterwards, Grantcharov and Ornea \cite{grantcharov2001reduction} proved the reduction theorem of Sasakian manifolds.

In this paper, we focus on the reduction theorem of cosymplectic manifolds (see \cite{cappelletti2013survey} and \cite{yoldas2021some} for more details about cosymplectic manifolds) proved by Albert and obtain reduction theorems of \textit{coK\"{a}hler manifolds} and \textit{3-cosymplectic manifolds}. They are another odd-dimensional versions of K\"{a}hler and hyperK\"{a}hler manifolds instead of Sasakian and 3-Sasakian manifolds, respectively. Typical examples of coK\"{a}hler/3-cosymplectic quotients are given by K\"{a}hler/hyperK\"{a}hler quotients by using ``cone constructions'' and ``mapping torus constructions'', respectively.

This paper is organized as follows. In \autoref{cosymplectic}, we recall cosymplectic structures and cosymplectic moment maps and the proof of the reduction theorem by Albert. In \autoref{cokahler} we prove the following coK\"{a}hler reduction theorem, which is a natural analogue of the K\"{a}hler reduction theorem.
\begin{theorem}[\autoref{100}]
    Let $(M,g,\varphi,\xi,\eta)$ be a coK\"{a}hler manifold with the underlying cosymplectic structure $(\eta,\omega)$. Suppose that there is a free and proper Hamiltonian action of a Lie group $G$ on $(M,\eta,\omega)$ which preserves $\varphi$. Let $\mu:M\to\mathfrak{g}^\ast$ be a moment map and $\zeta\in\mathfrak{g}^\ast$ a central and regular value of $\mu$.
     Then $M^\zeta:=\mu^{-1}(\zeta)/G$ admits a coK\"{a}hler structure $(g^\zeta,\varphi^\zeta,\xi^\zeta,\eta^\zeta)$. Moreover, the underlying cosymplectic manifold of $(M^\zeta,g^\zeta,\varphi^\zeta,\xi^\zeta,\eta^\zeta)$ is the cosymplectic quotient $(M^\zeta,\eta^\zeta,\omega^\zeta)$. \hfill{$\Box$}
\end{theorem}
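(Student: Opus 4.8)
\medskip
\noindent\textbf{Proof strategy.}
The plan is to follow the pattern of the K\"{a}hler reduction theorem of \cite{hitchin1987hyperkahler}: starting from Albert's cosymplectic quotient $(M^\zeta,\eta^\zeta,\omega^\zeta)$ from \autoref{cosymplectic}, I would produce a compatible metric $g^\zeta$ and a tensor $\varphi^\zeta$ on $M^\zeta$ and then verify $\nabla^{g^\zeta}\varphi^\zeta=0$, recalling that an almost contact metric manifold is coK\"{a}hler precisely when the structure tensor is parallel for the Levi--Civita connection. First observe that the hypotheses already force $G$ to act by automorphisms of the \emph{whole} structure: from $g(X,Y)=-\omega(X,\varphi Y)+\eta(X)\eta(Y)$ and the characterization of $\xi$ as the unique section of $\ker\omega$ with $\eta(\xi)=1$, an action preserving $\eta,\omega,\varphi$ also preserves $g$ and $\xi$.

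\emph{The pointwise splitting.} Fix $p\in N:=\mu^{-1}(\zeta)$ and let $\mathfrak g_p$ be the tangent space of the $G$-orbit, i.e.\ the span of the fundamental vector fields $X_M$ at $p$. From the moment map condition of \autoref{cosymplectic} together with $\omega(\cdot,\cdot)=g(\cdot,\varphi\cdot)$ one gets $\mathrm{grad}\,\mu^X=\pm\varphi X_M$, so the $g$-normal space of $N$ in $M$ is exactly $\varphi_p\mathfrak g_p$; also $\mathfrak g_p\subset\ker\eta_p$. Since $\zeta$ is central, $g(\varphi_pX_M,Y_M)(p)=\pm\langle\zeta,[Y,X]\rangle=0$, so $\mathfrak g_p\perp_g\varphi_p\mathfrak g_p$, and freeness of the action makes these $k$-dimensional with trivial intersection. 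Putting $H_p:=(\mathfrak g_p\oplus\varphi_p\mathfrak g_p)^{\perp_g}$ yields an orthogonal decomposition
\[
  T_pM=\mathfrak g_p\oplus\varphi_p\mathfrak g_p\oplus H_p,\qquad T_pN=\mathfrak g_p\oplus H_p ,
\]
with $\xi_p\in H_p$, and---this is the structural point---$\varphi_p$ maps $\mathfrak g_p\oplus\varphi_p\mathfrak g_p$ onto itself (as $\varphi^2=-\mathrm{Id}$ on $\ker\eta$) and maps $H_p$ into itself (as $\varphi$ is $g$-skew and preserves $\ker\eta$). Identifying $T_{[p]}M^\zeta$ with $H_p$ via $\pi_*=d\pi$, where $\pi:N\to M^\zeta$, define $g^\zeta,\varphi^\zeta,\eta^\zeta,\xi^\zeta$ by $g^\zeta(\pi_*u,\pi_*v)=g(u,v)$, $\varphi^\zeta\pi_*u=\pi_*\varphi u$, $\eta^\zeta(\pi_*u)=\eta(u)$, $\xi^\zeta_{[p]}=\pi_*\xi_p$ for $u,v\in H_p$; $G$-invariance of all data on $M$ makes these independent of the point chosen in the fibre, hence well-defined smooth tensors, and the almost contact metric relations are inherited verbatim because $H_p$ is $\varphi_p$-invariant and contains $\xi_p$.

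\emph{Closedness and parallelism.} For $u,v\in H_p$ one has $g^\zeta(\pi_*u,\varphi^\zeta\pi_*v)=g(u,\varphi v)=\omega(u,v)$ and $\eta^\zeta(\pi_*u)=\eta(u)$, so the fundamental $2$-form of $(g^\zeta,\varphi^\zeta,\xi^\zeta,\eta^\zeta)$ and $\eta^\zeta$ are exactly the forms determined by $\pi^*\omega^\zeta=\iota^*\omega$ and $\pi^*\eta^\zeta=\iota^*\eta$ with $\iota:N\hookrightarrow M$; injectivity of $\pi^*$ on forms and $d\omega=d\eta=0$ then give $d\omega^\zeta=d\eta^\zeta=0$, and this is Albert's cosymplectic quotient. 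For $\nabla^{g^\zeta}\varphi^\zeta=0$, take $X,Y$ to be $G$-invariant, everywhere-$H$-valued vector fields on $N$; then $\varphi X,\varphi Y$ are of the same type and $\pi_*(\varphi Y)=\varphi^\zeta\pi_*Y$. Composing the Gauss formula for $N\subset M$ with O'Neill's submersion formula for $\pi$---all correction directions lying in $\mathfrak g_p\oplus\varphi_p\mathfrak g_p$---gives $\nabla^{g^\zeta}_{\pi_*X}\pi_*Y=\pi_*\bigl(\mathrm{pr}_H\nabla^M_XY\bigr)$, where $\mathrm{pr}_H$ is the orthogonal projection onto $H_p$. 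Since $\varphi$ preserves both summands of $T_pM=(\mathfrak g_p\oplus\varphi_p\mathfrak g_p)\oplus H_p$ it commutes with $\mathrm{pr}_H$, so using $\nabla^M\varphi=0$,
\begin{align*}
  \nabla^{g^\zeta}_{\pi_*X}\bigl(\varphi^\zeta\pi_*Y\bigr)
  &=\pi_*\,\mathrm{pr}_H\!\bigl(\nabla^M_X(\varphi Y)\bigr)
   =\pi_*\,\mathrm{pr}_H\!\bigl(\varphi\,\nabla^M_XY\bigr)\\
  &=\varphi^\zeta\,\pi_*\,\mathrm{pr}_H\!\bigl(\nabla^M_XY\bigr)
   =\varphi^\zeta\,\nabla^{g^\zeta}_{\pi_*X}\pi_*Y .
\end{align*}
As such lifts span $TM^\zeta$ pointwise and $\nabla^{g^\zeta}\varphi^\zeta$ is tensorial, $\nabla^{g^\zeta}\varphi^\zeta=0$; hence $(M^\zeta,g^\zeta,\varphi^\zeta,\xi^\zeta,\eta^\zeta)$ is coK\"{a}hler with underlying cosymplectic structure $(M^\zeta,\eta^\zeta,\omega^\zeta)$.

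\emph{Main obstacle.} The first two paragraphs are linear algebra and bookkeeping; the real content is the last step, controlling how the Levi--Civita connection behaves under the two-stage operation ``restrict to the level set, then quotient by $G$''. What makes it go through is the orthogonal splitting above together with the observation that $\varphi$ maps the combined ``vertical plus $N$-normal'' subspace $\mathfrak g_p\oplus\varphi_p\mathfrak g_p$ to itself, which forces the second fundamental form of $N\subset M$ and the O'Neill tensor of $\pi$ to drop out upon projecting to $H$---the exact analogue of the mechanism behind K\"{a}hler reduction. (Alternatively the theorem could be derived from the compatibility of coK\"{a}hler reduction with the cylinder construction, but that compatibility is proved only afterwards, so the direct argument above is the natural route.)
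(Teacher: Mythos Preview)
Your proof is correct and follows essentially the same route as the paper: the same $g$-orthogonal splitting $T_pM=\mathfrak g_p\oplus\varphi_p\mathfrak g_p\oplus H_p$ with $\varphi_p(H_p)=H_p$ (the paper's \autoref{-2.0003}), the same definition of $g^\zeta,\varphi^\zeta$ by transport through $H_p$, and the same verification of $\nabla^{g^\zeta}\varphi^\zeta=0$ via the identity $\nabla^{g^\zeta}_XY=\pi_*\mathrm{pr}_H\nabla^M_{\widetilde X}\widetilde Y$ together with the fact that $\varphi$ commutes with $\mathrm{pr}_H$. The only cosmetic differences are that the paper states the last identity as a single ``quotient metric'' fact rather than splitting it into Gauss plus O'Neill, and that your centrality computation for $\mathfrak g_p\perp_g\varphi_p\mathfrak g_p$ is redundant once you have identified $\varphi_p\mathfrak g_p$ as the normal space of $N$.
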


In \autoref{3-cosymplectic} we introduce a notion of 3-cosymplectic moment maps and prove the following 3-cosymplectic reduction theorem, which is a natural analogue of the hyperK\"{a}hler reduction theorem. 
\begin{theorem}[\autoref{200}]
    Let $(M,g,(\varphi_i,\xi_i,\eta_i)_{i=1,2,3})$ be a 3-cosymplectic manifold with underlying cosymplectic structures $(\eta_i,\omega_i)_{i=1,2,3}$. Suppose that there is a free and proper action of a Lie group $G$ on $M$ which is Hamiltonian with respect to all three cosymplectic structures $(\eta_i,\omega_i)_{i=1,2,3}$ and preserves $(\varphi_i)_{i=1,2,3}$. Let $\mu:M\to\mathfrak{g}^\ast\otimes{\rm Im}\mathbb{H}$ be a 3-cosymplectic moment map and $\zeta\in\mathfrak{g}^\ast\otimes{\rm Im}\mathbb{H}$ a central and regular value of $\mu$. Then $M^\zeta:=\mu^{-1}(\zeta)/G$ inherits the 3-cosymplectic structure of $M$. \hfill{$\Box$}
\end{theorem}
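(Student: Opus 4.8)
The plan is to run the argument in parallel with the hyperK\"ahler reduction of Hitchin \emph{et al.}, feeding in \autoref{100} to handle the coK\"ahler refinement. Under the identification $\mathfrak g^\ast\otimes\mathrm{Im}\,\mathbb H\cong(\mathfrak g^\ast)^{3}$, write $\mu=(\mu_1,\mu_2,\mu_3)$ and $\zeta=(\zeta_1,\zeta_2,\zeta_3)$, so that each $\mu_i$ is a cosymplectic moment map for $(\eta_i,\omega_i)$ and $N:=\mu^{-1}(\zeta)=\bigcap_i\mu_i^{-1}(\zeta_i)$. Because $\zeta$ is central, $N$ is $G$-invariant; because $\zeta$ is regular, $N$ is a smooth submanifold of dimension $\dim M-3\dim G$ with $T_pN=\{v\in T_pM:\omega_i(X_A(p),v)=0\ \text{for all }i,A\}$, where $X_A$ denotes the fundamental vector field of $A\in\mathfrak g$; and because the action is free and proper, $M^\zeta=N/G$ is a smooth manifold of dimension $\dim M-4\dim G$, with quotient map $\pi\colon N\to M^\zeta$ and inclusion $j\colon N\hookrightarrow M$.

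The core step is a fibrewise decomposition of $TN$. Fix $p\in N$, set $\mathfrak g_p=\{X_A(p):A\in\mathfrak g\}\cong\mathfrak g$ and $W_p:=\mathfrak g_p+\varphi_1\mathfrak g_p+\varphi_2\mathfrak g_p+\varphi_3\mathfrak g_p$. Using $\omega_i=g(\varphi_i\cdot,\cdot)$ (up to sign), the identity $\eta_i(X_A)=0$ built into the notion of Hamiltonian action from \autoref{cosymplectic}, the vanishing $\omega_i(X_A,X_B)=\langle\zeta_i,[A,B]\rangle=0$ along $N$ (centrality of $\zeta$, which is also why $N$ is $G$-invariant), and the quaternionic relations $\varphi_i\varphi_j=\varphi_k-\eta_j\otimes\xi_i$ for cyclic $(i,j,k)$, I would show: (i) the sum $W_p=\mathfrak g_p\oplus\varphi_1\mathfrak g_p\oplus\varphi_2\mathfrak g_p\oplus\varphi_3\mathfrak g_p$ is direct, of dimension $4\dim G$, and $\varphi_i$-invariant for each $i$ (the correction terms $\eta_j\otimes\xi_i$ drop out on $W_p$ because $\mathfrak g_p\perp\xi_j$); (ii) consequently $\mathcal H_p:=W_p^{\perp_g}$ is $\varphi_i$-invariant (each $\varphi_i$ is $g$-skew, being the fundamental endomorphism of an almost contact metric structure) and contains $\xi_1(p),\xi_2(p),\xi_3(p)$; (iii) $T_pN=\bigl(\varphi_1\mathfrak g_p+\varphi_2\mathfrak g_p+\varphi_3\mathfrak g_p\bigr)^{\perp_g}=\mathfrak g_p\oplus\mathcal H_p$, the second equality forced by the dimension count and positive-definiteness of $g$. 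Hence $\pi_\ast$ restricts to an isomorphism $\mathcal H_p\xrightarrow{\ \sim\ }T_{\pi(p)}M^\zeta$ for every $p\in N$.

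Transporting the structure along $\pi$ is then routine: the $G$-action preserves $g$, all $\varphi_i,\eta_i$, hence all $\xi_i$, and the distributions $\mathfrak g_p,W_p,\mathcal H_p$, so $g|_{\mathcal H}$ and $\varphi_i|_{\mathcal H}$ push down to tensors $g^\zeta,\varphi_i^\zeta$ on $M^\zeta$, each $\xi_i$ descends to $\xi_i^\zeta:=\pi_\ast\xi_i$, and the $G$-basic forms $j^\ast\eta_i$, $j^\ast\omega_i$ (basic because $\iota_{X_A}j^\ast\eta_i=j^\ast(\eta_i(X_A))=0$ and $\iota_{X_A}j^\ast\omega_i=j^\ast d\mu_i^A=0$ along $N$) descend to $\eta_i^\zeta,\omega_i^\zeta$. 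The almost $3$-contact metric axioms and the quaternionic relations for $(g^\zeta,\varphi_i^\zeta,\xi_i^\zeta,\eta_i^\zeta)$ follow by restricting the ambient identities to $\mathcal H_p$, where every tensor in sight preserves $\mathcal H_p$, and pushing forward; $d\eta_i^\zeta=d\omega_i^\zeta=0$ follows from $d\eta_i=d\omega_i=0$ and injectivity of $\pi^\ast$ on forms; and one checks $\omega_i^\zeta=g^\zeta(\varphi_i^\zeta\cdot,\cdot)$ together with $\iota_{\xi_i^\zeta}\eta_i^\zeta=1$, $\iota_{\xi_i^\zeta}\omega_i^\zeta=0$. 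Thus each $(\eta_i^\zeta,\omega_i^\zeta)$ is exactly the cosymplectic quotient of \autoref{cosymplectic}, with Reeb field $\xi_i^\zeta$.

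What remains — and what I expect to be the main obstacle, just as verifying that the reduced structure is genuinely K\"ahler (resp.\ coK\"ahler) is the subtle point in the hyperK\"ahler reduction (resp.\ in \autoref{100}) — is to upgrade the almost $3$-contact metric structure obtained above to a genuine $3$-cosymplectic one, i.e.\ to prove $\nabla^\zeta\varphi_i^\zeta=0$ for the Levi-Civita connection of $g^\zeta$, equivalently normality of each $\varphi_i^\zeta$. If the development in \autoref{3-cosymplectic} provides a Hitchin-lemma analogue — an almost $3$-contact metric structure all of whose $\eta_i,\omega_i$ are closed is automatically $3$-cosymplectic — the previous paragraph already finishes the proof. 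Otherwise I would argue one structure at a time: by regularity of $\zeta$ the equations $\mu_2=\zeta_2$, $\mu_3=\zeta_3$ cut out a smooth submanifold $M_{23}\subseteq M$ which the linear algebra of the second step shows to be $\varphi_1$-invariant, hence coK\"ahler with the induced structure and underlying cosymplectic structure the restriction of $(\eta_1,\omega_1)$; the residual $G$-action on $M_{23}$ is free, proper, preserves $\varphi_1$, and is Hamiltonian for this cosymplectic structure with moment map $\mu_1|_{M_{23}}$ and central regular value $\zeta_1$, while $\mu^{-1}(\zeta)/G=(\mu_1|_{M_{23}})^{-1}(\zeta_1)/G$. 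Applying \autoref{100} on $M_{23}$ shows $(g^\zeta,\varphi_1^\zeta,\xi_1^\zeta,\eta_1^\zeta)$ is coK\"ahler, and permuting $1,2,3$ gives $\nabla^\zeta\varphi_i^\zeta=0$ for every $i$, so the reduced almost $3$-contact metric structure is $3$-cosymplectic. The point needing care in this second route is the identification $\mu^{-1}(\zeta)/G=(\mu_1|_{M_{23}})^{-1}(\zeta_1)/G$ together with the $\varphi_1$-invariance and coK\"ahlerness of $M_{23}$, which is again where the regularity and centrality of $\zeta$ enter.
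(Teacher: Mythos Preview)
Your route (b) is precisely the paper's proof: set $N:=\mu_2^{-1}(\zeta_2)\cap\mu_3^{-1}(\zeta_3)$ (your $M_{23}$), use \autoref{1} to see that it is an almost contact submanifold for $(\varphi_1,\xi_1,\eta_1)$, cite a result of Ludden to conclude that $(N,g|_N,\varphi_1|_N,\xi_1|_N,\eta_1|_N)$ is coK\"ahler, apply \autoref{100} to the $G$-action on $N$ with moment map $\mu_1|_N$, and then permute indices. The paper finishes by observing that the three quotient metrics $g_i^\zeta$ all coincide with the Riemannian submersion metric on $\mu^{-1}(\zeta)\to M^\zeta$ and declaring the resulting triple a $3$-cosymplectic structure; your explicit $W_p$-decomposition and verification of the quaternionic relations on $\mathcal H_p$ make rigorous a step the paper leaves implicit. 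Your route (a) is also available and shorter: the Remark following \autoref{-2.0001} records exactly the ``Hitchin-lemma analogue'' you anticipate --- an almost $3$-contact metric structure satisfying the quaternionic identities with all $\eta_i,\omega_i$ closed is automatically normal --- so once your first three paragraphs are in place the proof is already complete without ever passing to $M_{23}$, though the paper itself argues via route (b).
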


In \autoref{cone}, we study the reduction of geometric structures on cones. Let $M$ be a K\"{a}hler manifold. Then its \textit{cone} $C(M):=M\times \mathbb{R}$ admits a coK\"{a}hler structure. Conversely, if $M$ is a coK\"{a}hler manifold, then $C(M)$ admits a K\"{a}hler structure. We show that the K\"{a}hler (resp. coK\"{a}hler) quotient of $C(M)$ is the cone of the coK\"{a}hler (resp. K\"{a}hler) quotient of $M$. Similarly, hyperK\"{a}hler structures and 3-cosymplectic structures are also related by cone constructions, and we also show that hyperK\"{a}hler/3-cosymplectic reduction procedures are compatible with these cone constructions.

In \autoref{mapping}, we investigate coK\"{a}hler quotients of mapping tori of K\"{a}hler manifolds. For a K\"{a}hler manifold $S$ and a Hermitian isometry $f$ of $S$, the mapping torus
\[S_f=(S\times[0,1])/\{(p,0)\sim(f(p),1)\mid p\in S\}\]
admits a coK\"{a}hler structure. Suppose that there is a free and proper Hamiltonian action of a Lie group on $S$ which preserves the K\"{a}hler structure and let $\mu:S\to\mathfrak{g}^\ast$ be a moment map. Let $f$ be an equivariant Hermitian isometry of $S$. Then we show that the action on $S$ is naturally lifted to a Hamiltonian action on $S_f$ if and only if $\mu(f(p))=\mu(p)$ holds for some $p\in S$. In this situation, we prove that the K\"{a}hler/coK\"{a}hler reduction procedures are compatible with the mapping torus procedure.

In \autoref{dynamical}, we interpret our coK\"{a}hler reduction theorem from the physical viewpoint. In short, our result suggests that we can reduce time-dependent dynamical systems preserving the property that the flows of the system are geodesics.

\vspace{0.2in}
\noindent{\bf Acknowledgments.} The author would like to express his gratitude to his supervisor Professor Ryushi Goto for his encouragement and many fruitful discussions. This work was supported by JSPS KAKENHI Grant Number JP23KJ1487.

\section{Cosymplectic reduction theorem}\label{cosymplectic}

An \textit{almost cosymplectic structure} on $(2n+1)$-dimensional manifold $M$ is a pair of $\eta\in\Omega^1(M)$ and $\omega\in\Omega^2(M)$ such that $\eta\wedge\omega^n\neq0$. On an almost cosymplectic manifold $(M,\eta,\omega)$ there is a unique vector field $\xi$ which satisfies 
        \[\omega(\xi,-)=0,\qquad\eta(\xi)=1.\]
$\xi$ is called the Reeb vector field of $(M,\eta,\omega)$. Moreover, we have an isomorphism of $C^\infty(M)$-modules $\flat:\mathfrak{X}(M)\to \Omega^1(M)$ defined by $\flat(X)=\omega(X,-)+\eta(X)\eta$. Conversely, a pair $(\eta,\omega)$ is an almost cosymplectic structure if and only if the map $\flat:\mathfrak{X}(M)\to \Omega^1(M)$ defined as above is an isomorphism and there is a vector field $\xi$ which satisfies the above conditions.
    
An almost cosymplectic structure $(\eta,\omega)$ is called a contact structure when $\omega=d\eta$.
On the other hand, an almost cosymplectic structure $(\eta,\omega)$ is called a \textit{cosymplectic structure} when $d\eta=0,\ d\omega=0$.

For a contact structure $\eta\in\Omega^1(M)$, the distribution $\textrm{Ker}\eta$ is completely non-integrable. On the other hand, for a cosymplectic structure $(\eta,\omega)$, the distribution $\textrm{Ker}\eta$ is integrable since $\eta$ is closed. Therefore, contact structures and cosymplectic structures are two classes of almost cosymplectic structures which are polar opposites of each other.

For every function $f\in C^\infty(M)$ on a cosymplectic manifold $M$, we associate a vector field $X_f$ by
\[X_f=\flat^{-1}(df-\xi(f)\eta).\]
$X_f$ is called the Hamiltonian vector field of $f$. This condition is equivalent to 
\[\omega(X_f,-)=df-\xi(f)\eta,\qquad\eta(X_f)=0.\]

Let $(M,\eta,\omega)$ be a cosymplectic manifold and $G$ a Lie group acts on $M$ from left. We suppose that the action preserves $\eta,\omega$, i.e., $L_g^\ast \eta=\eta,\ L_g^\ast \omega=\omega$. Denote the Lie algebra of $G$ as $\mathfrak{g}$. Albert \cite{albert1989theoreme} defined the notion of moment maps on cosymplectic manifolds:
\begin{definition}
    A smooth map $\mu:M\to\mathfrak{g}^\ast$ is called a moment map when the following conditions are satisfied:
    \begin{itemize}
        \item $\mu$ is equivariant, i.e., $\mu(gp)=\textrm{Ad}^\ast_g\mu(p)$ holds for any $p\in M$ and $g\in G$.
        \item For any $A\in\mathfrak{g}$, the induced vector field $A^\ast\in\mathfrak{X}(M)$ is the Hamiltonian vector field of a function $\mu^A:M\to\mathbb{R}$ defined by $\mu^A(p)=(\mu(p))(A)$,
        \item For the Reeb vector field $\xi$ and any $A\in\mathfrak{g}$, $d\mu^A(\xi)=0$ holds.$\End$
    \end{itemize}
\end{definition}

The action of $G$ is said to be Hamiltonian if there is a moment map. Now we assume that there is a Hamiltonian action of $G$ on $(M,\eta,\omega)$ which is free and proper. Let $\zeta\in\mathfrak{g}^\ast$ be a regular value of a moment map $\mu:M\to\mathfrak{g}^\ast$. Since $\mu$ is equivariant, the isotropy group $G_\zeta$ acts on $\mu^{-1}(\zeta)$. Let $M^\zeta:=\mu^{-1}(\zeta)/G_\zeta$ and $\pi:\mu^{-1}(\zeta)\to M^\zeta$ be the natural projection.

\begin{theorem}[Albert \cite{albert1989theoreme}]\label{-3}
    There is a unique cosymplectic structure $(\eta^\zeta,\omega^\zeta)$ on $M^\zeta$ which satisfies $\pi^\ast\eta^\zeta=\eta|_{\mu^{-1}(\zeta)},\ \pi^\ast\omega^\zeta=\omega|_{\mu^{-1}(\zeta)}$.
\end{theorem}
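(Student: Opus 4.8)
The plan is to follow the standard Marsden–Weinstein reduction template, adapted to the cosymplectic setting, and to verify at each step that the presence of the closed one-form $\eta$ causes no obstruction. First I would check that $\mu^{-1}(\zeta)$ is a smooth submanifold of $M$ on which $G_\zeta$ acts freely and properly, so that the quotient $M^\zeta = \mu^{-1}(\zeta)/G_\zeta$ is a smooth manifold and $\pi\colon \mu^{-1}(\zeta)\to M^\zeta$ is a submersion; this is immediate from $\zeta$ being a regular value and the freeness/properness hypothesis. Then I would compute the dimensions: $\dim M^\zeta = \dim M - 2\dim G$ when $\zeta$ is central (so $G_\zeta = G$), which is again odd, say $2m+1$, as it must be for a cosymplectic structure to exist.

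The core of the argument is an infinitesimal linear-algebra computation on $\mu^{-1}(\zeta)$. For $p\in\mu^{-1}(\zeta)$, I would identify the tangent space $T_p\mu^{-1}(\zeta) = \bigcap_{A\in\mathfrak g}\ker d\mu^A_p$ and, using that $A^\ast = X_{\mu^A}$ so $\omega(A^\ast,-) = d\mu^A - \xi(\mu^A)\eta$, show that the $\omega$-orthogonal complement of $T_p\mu^{-1}(\zeta)$ relative to the nondegenerate pairing $\flat$ is exactly the tangent to the $G$-orbit $\mathfrak g\cdot p = \{A^\ast_p : A\in\mathfrak g\}$; here the third axiom $d\mu^A(\xi)=0$ is precisely what is needed to keep $\xi$ inside $\ker d\mu^A$ and to make this orthogonality clean (in particular $\xi_p\in T_p\mu^{-1}(\zeta)$ and $\eta(A^\ast)=0$ by equivariance and $d\mu^A(\xi)=0$). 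It follows that $\iota^\ast\omega$ (where $\iota\colon\mu^{-1}(\zeta)\hookrightarrow M$) and $\iota^\ast\eta$ are basic: they annihilate the orbit directions and are invariant, hence descend to forms $\omega^\zeta$, $\eta^\zeta$ on $M^\zeta$ with $\pi^\ast\omega^\zeta = \iota^\ast\omega$ and $\pi^\ast\eta^\zeta = \iota^\ast\eta$. Uniqueness is automatic since $\pi^\ast$ is injective on forms.

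It remains to check that $(\eta^\zeta,\omega^\zeta)$ is genuinely a cosymplectic structure. Closedness is easy: $d\pi^\ast\eta^\zeta = \iota^\ast d\eta = 0$ and likewise $d\omega^\zeta = 0$, and injectivity of $\pi^\ast$ gives $d\eta^\zeta = d\omega^\zeta = 0$. The nondegeneracy condition $\eta^\zeta\wedge(\omega^\zeta)^m\neq 0$ I would extract from the orthogonal-complement statement above: on $T_p\mu^{-1}(\zeta)$ the kernel of $\iota^\ast\omega$ together with $\iota^\ast\eta$ is spanned exactly by $\xi_p$ and the orbit directions (the orbit directions lie in $\ker\iota^\ast\eta\cap\ker\iota^\ast\omega$ because $\eta(A^\ast)=0$ and $A^\ast$ is $\omega$-orthogonal to $T_p\mu^{-1}(\zeta)$), so modulo the orbit the pair is nondegenerate in the cosymplectic sense, with $\xi$ descending to the Reeb field $\xi^\zeta$.

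The main obstacle I anticipate is the orthogonal-complement/nondegeneracy step: one must carefully track how $\xi$ and $\eta$ interact with the orbit directions and with $T_p\mu^{-1}(\zeta)$, and confirm that $\xi_p$ is not tangent to the orbit (which follows since $\eta(\xi)=1$ but $\eta(A^\ast)=0$) so that it survives to the quotient as a nowhere-zero vector field paired correctly with $\eta^\zeta$. Everything else — smoothness, closedness, descent of invariant basic forms — is routine, essentially identical to the symplectic case with the extra closed one-form carried along passively.
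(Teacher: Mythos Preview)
Your proposal is correct and follows essentially the same route as the paper: identify $T_p\mu^{-1}(\zeta)$ as the $\omega$-annihilator of $\mathfrak g_p$, check that $\iota^\ast\eta$ and $\iota^\ast\omega$ are basic and closed so they descend, and then verify nondegeneracy by computing the radical of the restricted pair. One small caution on your intermediate claim: because $\omega$ is degenerate with kernel $\mathbb R\xi_p$, the $\omega$-orthogonal of $T_p\mu^{-1}(\zeta)$ in $T_pM$ is $\mathfrak g_p\oplus\mathbb R\xi_p$, not $\mathfrak g_p$; the paper sidesteps this by passing to $\ker\eta_p$, where $\omega$ is nondegenerate, and taking the double orthogonal complement there to conclude that any lift of an element of $\ker\flat^\zeta$ lies in $\mathfrak g_p\cap T_p\mu^{-1}(\zeta)=\mathfrak g_{\zeta,p}$ --- which is exactly the precise form of your (correct) statement that the radical of $\iota^\ast\omega$ on $T_p\mu^{-1}(\zeta)$ is $\mathbb R\xi_p\oplus\mathfrak g_{\zeta,p}$.
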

\begin{proof}
    Since $d\mu^A(X)=\omega(A^\ast,X)$, for any $p\in\mu^{-1}(\zeta)$ we have 
    \begin{equation}\label{-2.1}
        T_p\mu^{-1}(\zeta)=\{X_p\in T_p M\mid\omega(A_p^\ast,X_p)=0,\ A\in\mathfrak{g}\}.
    \end{equation}    
    Let $\mathfrak{g}_\zeta$ be the Lie algebra of $G_\zeta$. For any $p\in\mu^{-1}(\zeta)$ and $g\in G_\zeta$ we have $\mu(L_g(p))=\zeta$, and by differentiating this we obtain
    \[\mathfrak{g}_\zeta=\{B\in\mathfrak{g}\mid d\mu(B^\ast_p)=0\}\]
    for any $p\in\mu^{-1}(\zeta)$. Hence, if we define $\mathfrak{g}_p:=\{A_p^\ast\mid A\in\mathfrak{g}\}$ and $\mathfrak{g}_{\zeta,p}:=\{B_p^\ast\mid B\in\mathfrak{g}_\zeta\}$, we obtain
    \begin{equation}\label{-2.01}
        \mathfrak{g}_{\zeta,p}=\mathfrak{g}_p\cap T_p\mu^{-1}(\zeta).
    \end{equation}

    We see that $\eta|_{\mu^{-1}(\zeta)}$ and $\omega|_{\mu^{-1}(\zeta)}$ are basic with respect to the fibration $\pi:\mu^{-1}(\zeta)\to M^\zeta$. For any $B\in\mathfrak{g}_\zeta$, we have $\eta|_{\mu^{-1}(\zeta)}(B^\ast)=0$. In addition, since $d\eta|_{\mu^{-1}(\zeta)}=0$,
    \[\mathcal{L}_{B^\ast}\eta|_{\mu^{-1}(\zeta)}=d\iota_{B^\ast}\eta|_{\mu^{-1}(\zeta)}+\iota_{B^\ast}d\eta|_{\mu^{-1}(\zeta)}=0\]
    holds, and thus $\eta|_{\mu^{-1}(\zeta)}$ is basic. On the other hand, (\ref{-2.1}) implies that $\omega|_{\mu^{-1}(\zeta)}(B^\ast_p,X_p)=0$ for any $B\in\mathfrak{g}_\zeta$ and $X_p\in T_p\mu^{-1}(\zeta)$, so similarly $\omega|_{\mu^{-1}(\zeta)}$ is basic. Then we obtain $\eta^\zeta$ and $\omega^\zeta$. Moreover, they are closed since $\eta,\omega$ are closed and $\pi^\ast$ is injective.

    All that remains is to prove that $(\eta^\zeta,\omega^\zeta)$ is an almost cosymplectic structure. We have $\xi_p\in T_p\mu^{-1}(\zeta)$ for any $p\in\mu^{-1}(\zeta)$, and $L_g^\ast \eta=\eta,\ L_g^\ast \omega=\omega$ implies $(L_g)_\ast\xi_p=\xi_{L_g(p)}$. So we can define a vector field $\xi^\zeta=d\pi(\xi|_{\mu^{-1}(\zeta)})$ on $M^\zeta$, and we have $\eta^\zeta(\xi^\zeta)=1,\ \omega^\zeta(\xi^\zeta,-)=0$.

    Lastly, we prove that the map $\flat^\zeta:TM^\zeta\to T^\ast M^\zeta;\ X^\zeta\mapsto\iota_{X^\zeta}\omega^\zeta+\iota_{X^\zeta}\eta^\zeta\eta^\zeta$ is an isomorphism. Suppose that $X^\zeta\in TM^\zeta$ satisfies $\flat^\zeta(X^\zeta)=0$. Take $p\in\mu^{-1}(\zeta)$ and let $x=\pi(p)$. We can take $X_p\in T_p\mu^{-1}(\zeta)$ such that $d\pi(X_p)=X_x^\zeta$. Then 
    \[\omega(X_p,Y_p)+\eta(X_p)\eta(Y_p)=0\]
    holds for any $Y_p\in T_p\mu^{-1}(\zeta)$, and this implies
    \begin{equation}\label{-2.001}
        \eta(X_p)=0,\qquad\omega(X_p,Y_p)=0.
    \end{equation}
    The almost cosymplectic structure $(\eta,\omega)$ on $M$ gives a decomposition
    \[T_pM=\mathbb{R}\langle\xi_p\rangle\oplus\textrm{Ker}\eta_p\]
    and $\omega_p|_{\textrm{Ker}\eta_p}$ is non-degenerate. We define $E_p=\textrm{Ker}\eta_p\cap T_p\mu^{-1}(\zeta)$. Then by (\ref{-2.1}) and (\ref{-2.001}) we obtain
    \[X_p\in(E_p)^{\omega_p|_{\textrm{Ker}\eta_p}}=((\mathfrak{g}_p)^{\omega_p|_{\textrm{Ker}\eta_p}})^{\omega_p|_{\textrm{Ker}\eta_p}}=\mathfrak{g}_p,\]
    where $V^{\omega_p|_{\textrm{Ker}\eta_p}}$ denotes the orthogonal complement of $V\subset\textrm{Ker}\eta_p$ with respect to $\omega_p|_{\textrm{Ker}\eta_p}$. Now we can conclude that $X_p\in\mathfrak{g}_{\zeta,p}$ by (\ref{-2.01}) and thus $X_x^\zeta=d\pi(X_p)=0.$
\end{proof}

\section{CoK\"{a}hler case}\label{cokahler}

Let $g$ be a Riemannian metric and $(\varphi,\xi,\eta)$ an almost contact structure on $M$, i.e., a triplet of $\varphi\in\textrm{End}(TM)$, a vector field $\xi$ and a 1-form $\eta$, which satisfies $\varphi^2=-\rm{id}+\eta\otimes\xi,\ \eta(\xi)=1$. Then a quartets $(g,\varphi,\xi,\eta)$ is called an almost contact metric structure if 
\[g(\varphi X,\varphi Y)=g(X,Y)-\eta(X)\eta(Y)\]
holds.

For a almost contact metric structure $(g,\varphi,\xi,\eta)$, we obtain an almost cosymplectic structure $(\eta,\omega)$, where $\omega(X,Y)=g(X,\varphi Y)$.

\begin{definition}
    An \textit{almost coK\"{a}hler structure} is an almost contact metric structure $(g,\varphi,\xi,\eta)$ such that the induced almost cosymplectic structure $(\eta,\omega)$ is a cosymplectic structure.
   
    An almost coK\"{a}hler structure $(g,\varphi,\xi,\eta)$ is said to be a \textit{coK\"{a}hler structure} if the almost contact structure $(\varphi,\xi,\eta)$ is normal, i.e., the Nijenhuis tensor $N_\varphi$ of $\varphi$ satisfies $N_\varphi=-2d\eta\otimes\xi$.$\End$
\end{definition}

If we replace the cosymplectic condition $d\eta=d\omega=0$ with $\omega=d\eta$ in the definition above, then $(g,\varphi,\xi,\eta)$ is called a Sasakian structure.

Let $(M,g,\varphi,\xi,\eta)$ be a coK\"{a}hler manifold and $G$ a Lie group. Suppose that there is a Hamiltonian action of $G$ on a cosymplectic manifold $(M,\eta,\omega)$, and let $\zeta\in\mathfrak{g}^\ast$ be a regular value of a moment map $\mu:M\to\mathfrak{g}^\ast$. Moreover, we assume that the action preserves $\varphi$ (and hence preserves the metric $g$) and $\zeta$ is central, i.e., $G_\zeta=G$ holds.

\begin{lemma}\label{-2.0003}
    Let $p\in\mu^{-1}(\zeta)$. Then there is a subspace $H_p\subset T_pM$ and a decomposition
    \begin{equation}\label{-2.0002}
        T_p M=H_p\oplus\mathfrak{g}_{p}\oplus\varphi_p(\mathfrak{g}_p),
    \end{equation}
    which is orthogonal with respect to the metric $g$. Moreover, $H_p$ is invariant under $\varphi_p$.
\end{lemma}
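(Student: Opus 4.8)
The plan is to build the decomposition inside $\textrm{Ker}\,\eta_p$, where $\varphi_p$ restricts to a genuine $g_p$-compatible complex structure, and to exploit the centrality of $\zeta$ to force $\mathfrak{g}_p$ and $\varphi_p(\mathfrak{g}_p)$ to be mutually orthogonal. First I would recall the standard identities of almost contact metric geometry: $\varphi\xi=0$, $\eta\circ\varphi=0$, and $g(X,\xi)=\eta(X)$ (so that $\textrm{Ker}\,\eta_p=(\mathbb{R}\langle\xi_p\rangle)^{\perp}$ and $\lvert\xi_p\rvert=1$), together with the fact that $\varphi$ is skew-adjoint for $g$, which is exactly the skew-symmetry of $\omega(X,Y)=g(X,\varphi Y)$. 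In particular $\varphi_p^2=-\mathrm{id}$ on $\textrm{Ker}\,\eta_p$. Next, since each induced vector field $A^\ast$ is the Hamiltonian vector field of $\mu^A$ we have $\eta(A^\ast)=0$, hence $\mathfrak{g}_p\subset\textrm{Ker}\,\eta_p$ and therefore also $\varphi_p(\mathfrak{g}_p)\subset\textrm{Ker}\,\eta_p$; moreover the third moment map condition $d\mu^A(\xi)=0$ upgrades $\omega(A^\ast,-)=d\mu^A-\xi(\mu^A)\eta$ to $\omega(A^\ast,-)=d\mu^A$.

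The key step is the following computation: for $A,B\in\mathfrak{g}$ and $p\in\mu^{-1}(\zeta)$,
\[
\omega_p(A^\ast_p,B^\ast_p)=d\mu^A(B^\ast_p)=(B^\ast\mu^A)(p)=\pm\langle\mu(p),[A,B]\rangle=\pm\langle\zeta,[A,B]\rangle,
\]
where the third equality uses equivariance of $\mu$. Since $\zeta$ is central we have $\mathrm{ad}^\ast_B\zeta=0$ for all $B\in\mathfrak{g}$, i.e.\ $\langle\zeta,[A,B]\rangle=0$, so $\mathfrak{g}_p$ is isotropic for $\omega_p$. Rewriting $\omega_p(A^\ast_p,B^\ast_p)=g_p(A^\ast_p,\varphi_p B^\ast_p)$, this says precisely $\mathfrak{g}_p\perp\varphi_p(\mathfrak{g}_p)$. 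From this orthogonality together with freeness of the action one also deduces $\mathfrak{g}_p\cap\varphi_p(\mathfrak{g}_p)=0$ and that $\varphi_p$ is injective on $\mathfrak{g}_p$: if $\varphi_p B^\ast_p\in\mathfrak{g}_p$ then $\varphi_p B^\ast_p$ is orthogonal to itself, so $\varphi_p B^\ast_p=0$, forcing $B^\ast_p\in\textrm{Ker}\,\varphi_p\cap\textrm{Ker}\,\eta_p=\{0\}$.

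Finally I would set $H_p:=(\mathfrak{g}_p\oplus\varphi_p(\mathfrak{g}_p))^{\perp}$ inside $T_pM$ (with respect to $g_p$). The orthogonal splitting (\ref{-2.0002}) is then immediate, and it remains to check that $H_p$ is $\varphi_p$-invariant. This follows from two observations: the subspace $\mathfrak{g}_p\oplus\varphi_p(\mathfrak{g}_p)$ is itself $\varphi_p$-invariant, because $\varphi_p^2=-\mathrm{id}$ on $\textrm{Ker}\,\eta_p\supset\mathfrak{g}_p$ sends $\varphi_p(\mathfrak{g}_p)$ back into $\mathfrak{g}_p$; and the $g_p$-orthogonal complement of any $\varphi_p$-invariant subspace is again $\varphi_p$-invariant, since $\varphi_p$ is skew-adjoint. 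I expect the only genuine subtlety — the "main obstacle" — to be the use of centrality of $\zeta$ in the displayed computation: without it, $\mathfrak{g}_p$ and $\varphi_p(\mathfrak{g}_p)$ need not be orthogonal or even transverse, and the rest of the argument collapses. One also has to keep careful track of the $\xi_p$-direction throughout, to be certain that all vectors in play lie in $\textrm{Ker}\,\eta_p$, where alone $\varphi_p$ squares to $-\mathrm{id}$.
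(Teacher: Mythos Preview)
Your proof is correct. Both you and the paper end up defining $H_p$ as the $g$-orthogonal complement of $\mathfrak{g}_p\oplus\varphi_p(\mathfrak{g}_p)$ and deduce $\varphi_p$-invariance from the $\varphi_p$-invariance of that subspace; the arguments differ only in how the orthogonality $\mathfrak{g}_p\perp\varphi_p(\mathfrak{g}_p)$ is obtained. You compute $\omega_p(A^\ast_p,B^\ast_p)=\pm\langle\zeta,[A,B]\rangle$ via infinitesimal equivariance and kill it with centrality of $\zeta$. The paper instead first shows $g(v,\varphi_pA^\ast_p)=\omega(v,A^\ast_p)=-d\mu^A(v)=0$ for every $v\in T_p\mu^{-1}(\zeta)$, so that $\varphi_p(\mathfrak{g}_p)$ is exactly the $g$-orthogonal complement of the level set, and then invokes centrality only through the inclusion $\mathfrak{g}_p\subset T_p\mu^{-1}(\zeta)$. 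These are the same computation in different clothes, but the paper's route has a small bonus: it yields the identification $T_p\mu^{-1}(\zeta)=H_p\oplus\mathfrak{g}_p$ for free, which is precisely the description used afterwards to lift tangent vectors from the quotient $M^\zeta$ back to $H_p$.
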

\begin{proof}
    For any $v\in T_p\mu^{-1}(\zeta)$ and $A^\ast_p\in\mathfrak{g}_p$, we have
    \[g(v,\varphi_pA^\ast_p)=\omega(v,A^\ast_p)=-(d\mu^A)(v)=0.\]
    In addition, since $\varphi_p$ is an isomorphism on $\textrm{Ker}\eta_p$, we have $\dim\varphi_p(\mathfrak{g}_p)=\dim G$ and thus we obtain a decomposition
    \[T_p M=T_p\mu^{-1}(\zeta)\oplus\varphi_p(\mathfrak{g}_p).\]
    We define $H_p$ as the $g$-orthogonal complement of $\mathfrak{g}_{p}$ in $T_p\mu^{-1}(\zeta)$. Then we obtain the decomposition (\ref{-2.0002}). Note that $\xi_p\in H_p$ holds since $g(\xi_p,A^\ast_p)=\eta(A^\ast_p)=0$. Therefore the decomposition (\ref{-2.0002}) implies $\varphi_p(H_p)=H_p$ since $\varphi_p^2(v)=-v$ holds for $v\in\textrm{Ker}\eta_p$.
\end{proof}

\begin{lemma}\label{-2.00011}
    $H_p$ is invariant under the action of $G$, namely, $(L_h)_\ast(H_p)=H_{hp}$ holds for any $h\in G$ and $p\in\mu^{-1}(\zeta)$.
\end{lemma}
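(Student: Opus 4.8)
The plan is to exploit the fact that for each $h\in G$ the differential $(L_h)_\ast\colon T_pM\to T_{hp}M$ is a linear isometry that carries every ingredient entering the definition of $H_p$ to the corresponding ingredient at $hp$. Recall from the proof of \autoref{-2.0003} that $H_p$ is, by construction, the $g$-orthogonal complement of $\mathfrak{g}_p$ inside $T_p\mu^{-1}(\zeta)$. So it is enough to check three things: that $(L_h)_\ast$ preserves $g$, that it maps $T_p\mu^{-1}(\zeta)$ onto $T_{hp}\mu^{-1}(\zeta)$, and that it maps $\mathfrak{g}_p$ onto $\mathfrak{g}_{hp}$.

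First I would record that, since the action preserves $\varphi,\xi,\eta$, it preserves $g$, so $g((L_h)_\ast v,(L_h)_\ast w)=g(v,w)$ for all $v,w\in T_pM$; in particular $(L_h)_\ast$ is an isomorphism taking $g$-orthogonal complements to $g$-orthogonal complements. Next, because $\zeta$ is central we have $G_\zeta=G$, so $L_h$ maps $\mu^{-1}(\zeta)$ to itself and hence $(L_h)_\ast$ restricts to an isomorphism $T_p\mu^{-1}(\zeta)\xrightarrow{\ \sim\ }T_{hp}\mu^{-1}(\zeta)$. Finally, the standard transformation rule $(L_h)_\ast(A^\ast_p)=(\mathrm{Ad}_hA)^\ast_{hp}$ for $A\in\mathfrak{g}$ gives $(L_h)_\ast(\mathfrak{g}_p)=\mathfrak{g}_{hp}$.

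Combining these: if $v\in H_p$ then $(L_h)_\ast v\in T_{hp}\mu^{-1}(\zeta)$, and for any $w'\in\mathfrak{g}_{hp}$, writing $w'=(L_h)_\ast w$ with $w\in\mathfrak{g}_p$ yields $g((L_h)_\ast v,w')=g(v,w)=0$; hence $(L_h)_\ast v\in H_{hp}$, i.e. $(L_h)_\ast(H_p)\subseteq H_{hp}$. Applying the same inclusion with $h$ replaced by $h^{-1}$ and $p$ by $hp$ gives the reverse inclusion, so $(L_h)_\ast(H_p)=H_{hp}$. (Alternatively, since the action is free and $\zeta$ is regular, $\dim H_p=\dim M-2\dim G$ is independent of $p$, and equality follows from the injectivity of $(L_h)_\ast$.)

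I do not expect a genuine obstacle here: the only points requiring any care are the bookkeeping identities $(L_h)_\ast(\mathfrak{g}_p)=\mathfrak{g}_{hp}$ and $(L_h)_\ast\bigl(T_p\mu^{-1}(\zeta)\bigr)=T_{hp}\mu^{-1}(\zeta)$, which follow respectively from the infinitesimal-action rule and from the centrality of $\zeta$, and the observation that $(L_h)_\ast$ is a $g$-isometry. Everything else is formal manipulation.
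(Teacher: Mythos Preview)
Your argument is correct and follows the same overall strategy as the paper: show that $(L_h)_\ast$ carries $T_p\mu^{-1}(\zeta)$ to $T_{hp}\mu^{-1}(\zeta)$ and $\mathfrak{g}_p$ to $\mathfrak{g}_{hp}$, then invoke that $(L_h)_\ast$ is a $g$-isometry so that orthogonal complements are preserved.

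There is one small but genuine difference in how the first step is handled. You use the centrality of $\zeta$ directly: since $G_\zeta=G$, the diffeomorphism $L_h$ preserves the submanifold $\mu^{-1}(\zeta)$, and differentiating gives $(L_h)_\ast\bigl(T_p\mu^{-1}(\zeta)\bigr)=T_{hp}\mu^{-1}(\zeta)$. The paper instead appeals to the three-term decomposition $T_pM=H_p\oplus\mathfrak{g}_p\oplus\varphi_p(\mathfrak{g}_p)$ from \autoref{-2.0003}: it first checks, using the $\varphi$-invariance of the action together with $(L_h)_\ast A^\ast_p=(\mathrm{Ad}_hA)^\ast_{hp}$, that $(L_h)_\ast\bigl(\varphi_p(\mathfrak{g}_p)\bigr)=\varphi_{hp}(\mathfrak{g}_{hp})$, and then reads off that $(L_h)_\ast v\in T_{hp}\mu^{-1}(\zeta)$ for $v\in H_p$ from the decomposition. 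Your route is shorter and uses only the metric invariance and the centrality of $\zeta$, not the $\varphi$-invariance; the paper's route has the minor virtue of exercising the decomposition just established, but is otherwise more roundabout. Either way the conclusion is the same.
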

\begin{proof}
    For any $A^\ast_p\in\mathfrak{g}_p$, we have 
    \[(L_h)_\ast(\varphi_pA^\ast_p)=\varphi_{hp}(L_h)_\ast A^\ast_p=\varphi_{hp}(\textrm{Ad}_hA)^\ast_{hp}\]
    and thus $(L_h)_\ast(\varphi_h(\mathfrak{g}_p))=\varphi_{hp}(\mathfrak{g}_{hp})$. Hence the decomposition (\ref{-2.0002}) implies $(L_h)_\ast v\in T_{hp}\mu^{-1}(\zeta)$ for $v\in H_p$. Moreover, $(L_h)_\ast v$ is orthogonal to $\mathfrak{g}_{hp}$ since the action of $G$ preserves the metric $g$, therefore we obtain $(L_h)_\ast(H_p)=H_{hp}$.
\end{proof}

Now we obtain the following reduction theorem.
\begin{theorem}\label{100}
    Let $(M,g,\varphi,\xi,\eta)$ be a coK\"{a}hler manifold with the underlying cosymplectic structure $(\eta,\omega)$. Suppose that there is a free and proper Hamiltonian action of a Lie group $G$ on $(M,\eta,\omega)$ which preserves $\varphi$. Let $\mu:M\to\mathfrak{g}^\ast$ be a moment map and $\zeta\in\mathfrak{g}^\ast$ a central and regular value of $\mu$.
    Then $M^\zeta:=\mu^{-1}(\zeta)/G$ admits a coK\"{a}hler structure $(g^\zeta,\varphi^\zeta,\xi^\zeta,\eta^\zeta)$. Moreover, the underlying cosymplectic manifold of $(M^\zeta,g^\zeta,\varphi^\zeta,\xi^\zeta,\eta^\zeta)$ is the cosymplectic quotient $(M^\zeta,\eta^\zeta,\omega^\zeta)$.
\end{theorem}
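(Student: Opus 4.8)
The plan is to combine Albert's cosymplectic reduction theorem (\autoref{-3}) with the two preceding lemmas to transport the tensor $\varphi$ down to the quotient. Since $\zeta$ is central we have $G_\zeta = G$, so $M^\zeta = \mu^{-1}(\zeta)/G$ already carries the cosymplectic structure $(\eta^\zeta, \omega^\zeta)$ with $\pi^\ast \eta^\zeta = \eta|_{\mu^{-1}(\zeta)}$ and $\pi^\ast \omega^\zeta = \omega|_{\mu^{-1}(\zeta)}$. The metric $g^\zeta$ and endomorphism $\varphi^\zeta$ should be defined via the horizontal spaces $H_p$: by \autoref{-2.0003}, for $p \in \mu^{-1}(\zeta)$ the differential $d\pi_p$ restricts to a linear isomorphism $H_p \xrightarrow{\sim} T_{\pi(p)} M^\zeta$ (because $\mathfrak{g}_p$ is the vertical space of $\pi$ and $T_p\mu^{-1}(\zeta) = H_p \oplus \mathfrak{g}_p$), and by \autoref{-2.00011} this identification is $G$-equivariant, so $g^\zeta$ and $\varphi^\zeta$ can be defined by declaring $d\pi_p|_{H_p}$ to intertwine $g|_{H_p}$ with $g^\zeta$ and $\varphi_p|_{H_p}$ with $\varphi^\zeta$ — this is well-defined precisely because of \autoref{-2.00011} (different choices of $p$ in a fiber differ by the $G$-action, which preserves both $g$ and $\varphi$ and maps $H_p$ to $H_{hp}$). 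Note $\varphi_p(H_p) = H_p$ is exactly the last assertion of \autoref{-2.0003}, so $\varphi^\zeta$ is a genuine endomorphism of $TM^\zeta$.

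Next I would verify the algebraic identities. One checks $\varphi^\zeta \xi^\zeta = 0$, $\eta^\zeta(\xi^\zeta) = 1$, and $(\varphi^\zeta)^2 = -\mathrm{id} + \eta^\zeta \otimes \xi^\zeta$ by pulling back along $\pi$ and using that $\xi_p \in H_p$ (shown in the proof of \autoref{-2.0003}) together with the corresponding identities for $(\varphi,\xi,\eta)$ on $H_p$; since $\pi^\ast$ is injective on forms and $d\pi|_{H_p}$ is an isomorphism, the identities descend. Similarly the compatibility $g^\zeta(\varphi^\zeta X, \varphi^\zeta Y) = g^\zeta(X,Y) - \eta^\zeta(X)\eta^\zeta(Y)$ follows from the analogous identity on $M$ restricted to $H_p$. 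The key point linking the metric and cosymplectic data is that $\omega^\zeta(X,Y) = g^\zeta(X, \varphi^\zeta Y)$: this holds because for horizontal lifts $\tilde X, \tilde Y \in H_p$ we have $\omega^\zeta(X,Y) = \omega(\tilde X, \tilde Y) = g(\tilde X, \varphi_p \tilde Y) = g^\zeta(X, \varphi^\zeta Y)$, using $\varphi_p \tilde Y \in H_p$. Hence $(\eta^\zeta, \omega^\zeta)$ is the cosymplectic structure induced by $(g^\zeta, \varphi^\zeta, \xi^\zeta, \eta^\zeta)$, which is the ``moreover'' claim; since $\eta^\zeta$ and $\omega^\zeta$ are closed by \autoref{-3}, $(g^\zeta,\varphi^\zeta,\xi^\zeta,\eta^\zeta)$ is an almost coK\"ahler structure.

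The main obstacle I expect is showing normality, i.e.\ that the Nijenhuis tensor satisfies $N_{\varphi^\zeta} = -2\, d\eta^\zeta \otimes \xi^\zeta = 0$ (since $d\eta^\zeta = 0$). A clean route is to use the standard fact that for coK\"ahler manifolds normality is equivalent to the parallelism $\nabla \varphi = 0$ with respect to the Levi-Civita connection, equivalently that $\varphi$ is integrable in the appropriate sense; then I would argue that the horizontal distribution $H$ on $\mu^{-1}(\zeta)$ is invariant under $\varphi$ and that the Levi-Civita connection of $g^\zeta$ is the projection of that of $g$ restricted to horizontal vector fields (a Riemannian submersion argument, using that $\mu^{-1}(\zeta) \hookrightarrow M$ is totally geodesic in the relevant directions is \emph{not} automatic — instead one works directly with horizontal lifts). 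Alternatively, and perhaps more robustly, I would compute $N_{\varphi^\zeta}$ directly on horizontal lifts: for $X, Y$ horizontal, $N_{\varphi^\zeta}(X,Y)$ is computed from brackets of their horizontal lifts, and one must control the vertical and $\varphi(\mathfrak{g}_p)$ components that appear when bracketing lifts. The relation $N_\varphi = -2 d\eta \otimes \xi = 0$ on $M$ gives the vanishing of the full Nijenhuis tensor upstairs; projecting to $H_p$ and checking that the correction terms (coming from the failure of $[\tilde X, \tilde Y]$ to be horizontal) also vanish — using that $\varphi$ preserves $H$, that the $G$-action preserves $\varphi$, and that $\mu^A$ are $\varphi$-related to the moment data — is the technical heart of the argument. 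This is the analogue of the corresponding step in the K\"ahler reduction theorem of \cite{hitchin1987hyperkahler}, where one shows the complex structure descends; here the extra Reeb direction $\xi^\zeta$ sits inside $H$ and behaves trivially, so the computation is a mild enlargement of the K\"ahler case.
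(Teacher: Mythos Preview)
Your proposal is correct, and your first route for normality---using Blair's characterization that an almost contact metric structure is coK\"ahler iff $\nabla\varphi=0$---is exactly what the paper does. The paper does not pursue your alternative direct Nijenhuis computation.

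Where the paper is crisper than your sketch: rather than worrying separately about the Riemannian submersion $\mu^{-1}(\zeta)\to M^\zeta$ and the inclusion $\mu^{-1}(\zeta)\hookrightarrow M$, the paper exploits the full orthogonal decomposition $T_pM = H_p \oplus \mathfrak{g}_p \oplus \varphi_p(\mathfrak{g}_p)$ of \autoref{-2.0003} and writes a single formula
\[\nabla^\zeta_X Y = d\pi\bigl(\mathrm{pr}_H(\nabla_{\tilde X}\tilde Y)\bigr),\]
with $\mathrm{pr}_H:TM\to H$ the orthogonal projection in the ambient $TM$ and $\tilde X,\tilde Y$ extended off $\mu^{-1}(\zeta)$. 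Your concern about total geodesy evaporates: the second fundamental form of $\mu^{-1}(\zeta)\subset M$ takes values in the normal bundle $\varphi(\mathfrak{g}_p)$, which $\mathrm{pr}_H$ kills anyway. The key algebraic point is then that $\varphi$ preserves both $H_p$ and its orthogonal complement $\mathfrak{g}_p \oplus \varphi_p(\mathfrak{g}_p)$ (the latter since $\varphi^2(\mathfrak{g}_p)=-\mathfrak{g}_p$, using $\eta(A^\ast)=0$), so $\varphi$ commutes with $\mathrm{pr}_H$, and $\nabla\varphi=0$ upstairs yields $\nabla^\zeta\varphi^\zeta=0$ in one line. Your option (b) is unnecessary.
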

\begin{proof}
    For any $p\in\mu^{-1}(\zeta)$, the map $(d\pi)_p|_{H_p}:H_p\to T_{\pi(p)}M^\zeta$ is an isomorphism.
   We define a Riemannian metric $g^\zeta$ on $M^\zeta$ and $\varphi^\zeta\in{\rm End}(TM^\zeta)$ as pushforwards by $(d\pi)_p|_{H_p}$, i.e.,
   \[g^\zeta_{\pi(p)}(X_{\pi(p)},Y_{\pi(p)})=g_p(\widetilde{X}_p,\widetilde{Y}_p),\]
   \[\varphi^\zeta_{\pi(p)}(X_{\pi(p)})=(d\pi)_p(\varphi_p(\widetilde{X}_p)),\]
   where $\widetilde{X}_p,\widetilde{Y}_p\in H_p$ are vectors which satisfies
   \[(d\pi)_p(\widetilde{X}_p)=X_{\pi(p)},\qquad(d\pi)_p(\widetilde{Y}_p)=Y_{\pi(p)}.\]
   
   We check that $g^\zeta_{\pi(p)},\varphi^\zeta_{\pi(p)}$ are independent of the choice of $p$. For $h\in G$, we have $(d\pi)_{hp}(L_h)_\ast\widetilde{X}_p=(d\pi)_p\widetilde{X}_p$, and thus \autoref{-2.00011} implies $\widetilde{X}_{hp}=(L_h)_\ast\widetilde{X}_p$. Hence we obtain
   \[g_p(\widetilde{X}_p,\widetilde{Y}_p)=g_{hp}((L_h)_\ast\widetilde{X}_p,(L_h)_\ast\widetilde{Y}_p)=g_{hp}(\widetilde{X}_{hp},\widetilde{Y}_{hp}),\]
   \[\begin{split}
    (d\pi)_p(\varphi_p(\widetilde{X}_p))&=(d\pi)_{hp}(L_h)_\ast(\varphi_p(\widetilde{X}_p))\\
   &=(d\pi)_{hp}(\varphi_{hp}((L_h)_\ast\widetilde{X}_p))\\
   &=(d\pi)_{hp}(\varphi_{hp}(\widetilde{X}_{hp})),
   \end{split}\]
   therefore $g^\zeta_{\pi(p)},\varphi^\zeta_{\pi(p)}$ are well-defined (the metric $g^\zeta$ is called the quotient metric on $M^\zeta$).

   Let $(M^\zeta,\eta^\zeta,\omega^\zeta)$ be the cosymplectic quotient and $\xi^\zeta$ the Reeb vector field. Then $\eta^\zeta(\xi^\zeta)=1$ holds. Moreover, since $\varphi_p$ preserves $H_p$, we have $\widetilde{\varphi^\zeta(X)}=\varphi(\widetilde{X})$, and thus we obtain
   \[\begin{split}
    \varphi^\zeta(\varphi^\zeta(X))&=d\pi(\varphi\widetilde{\varphi^\zeta(X)})\\
   &=d\pi(\varphi\varphi(\widetilde{X}))\\
   &=d\pi(-\widetilde{X}+\eta(\widetilde{X})\xi)\\
   &=-X+\eta^\zeta(X)\xi^\zeta,
   \end{split}\]
   so $(\varphi^\zeta,\xi^\zeta,\eta^\zeta)$ is an almost contact structure on $M^\zeta$.
   
   We can easily check the compatibility of $g^\zeta,\varphi^\zeta$ with $\eta^\zeta,\omega^\zeta$, i.e.,
   \[g^\zeta(\varphi^\zeta X,\varphi^\zeta Y)=g^\zeta(X,Y)-\eta^\zeta(X)\eta^\zeta(Y),\]
   \[\omega^\zeta(X,Y)=g^\zeta(X,\varphi^\zeta Y).\]

   Lastly, we prove that $(g^\zeta,\varphi^\zeta,\xi^\zeta,\eta^\zeta)$ is a coK\"{a}hler structure. It was proved in \cite{blair1966theory} that an almost contact metric structure $(g,\varphi,\xi,\eta)$ is coK\"{a}hler if and only if $\nabla\varphi=0$, where $\nabla$ is the Levi-Civita connection of the metric $g$.
   Let $\nabla$ be the Levi-Civita connection of the metric on $M$. Using a general property of quotient metrics, we can compute the Levi-Civita connection $\nabla^\zeta$ of $g^\zeta$ by
   \[\nabla^\zeta_XY=d\pi(\textrm{pr}_H(\nabla_{\widetilde{X}}\widetilde{Y})),\]
   where $\widetilde{X},\widetilde{Y}$ are extended to a neighborhood of $\mu^{-1}(\zeta)$ and $\textrm{pr}_H:TM\to H$ denotes the orthogonal projection. Then since $\nabla\varphi=0$ we have
   \[\begin{split}
    \widetilde{\nabla^\zeta_X\varphi^\zeta Y}&=\textrm{pr}_H(\nabla_{\widetilde{X}}\varphi\widetilde{Y})=\textrm{pr}_H(\varphi\nabla_{\widetilde{X}}\widetilde{Y})\\
   &=\varphi\textrm{pr}_H(\nabla_{\widetilde{X}}\widetilde{Y})=\varphi(\widetilde{\nabla^\zeta_XY})\\
   &=\widetilde{\varphi^\zeta(\nabla^\zeta_XY)}.
   \end{split}\]
   Hence $\nabla^\zeta_X\varphi^\zeta(Y)=\varphi^\zeta(\nabla^\zeta_XY)$ and thus $\nabla^\zeta\varphi^\zeta=0$.
\end{proof}

\section{3-cosymplectic case}\label{3-cosymplectic}
In this section, we prove a reduction theorem for 3-cosymplectic manifolds. First we recall the definition of 3-cosymplectic structures.
\begin{definition}\label{-2.0001}
    A \textit{3-cosymplectic structure} on $M$ is a quartet $(g,(\varphi_i,\xi_i,\eta_i)_{i=1,2,3})$ of a Riemannian metric $g$ and three almost contact structures $(\varphi_i,\xi_i,\eta_i)_{i=1,2,3}$ on $M$ such that each $(g,\varphi_i,\xi_i,\eta_i)$ is coK\"{a}hler and
    \[\varphi_\gamma=\varphi_\alpha\varphi_\beta-\eta_\beta\otimes\xi_\alpha=-\varphi_\beta\varphi_\alpha+\eta_\alpha\otimes\xi_\beta,\]
    \[\xi_\gamma=\varphi_\alpha\xi_\beta=-\varphi_\beta\xi_\alpha,\]
    \[\eta_\gamma=\varphi_\beta^\ast\eta_\alpha=-\varphi_\alpha^\ast\eta_\beta\]
    holds for any even permutation $(\alpha,\beta,\gamma)$ of $\{1,2,3\}$.
    $\End$
\end{definition}

\begin{remark}
    This notion should be called ``3-coK\"{a}hler structures'', but since the name ``3-cosymplectic'' has become established, we will follow it here as well. If a quartet $(g,(\varphi_i,\xi_i,\eta_i)_{i=1,2,3})$ satisfies all conditions of \autoref{-2.0001} except the normality of each $(\varphi_i,\xi_i,\eta_i)$, the normality of them automatically follows (see \cite{pastore2004riemannian}).
    $\End$
\end{remark}

\begin{lemma}\label{0}
    Let $(M,g,(\varphi_i,\xi_i,\eta_i)_{i=1,2,3})$ be a 3-cosymplectic manifold and $\omega_2$ a 2-form defined by $\omega_2(X,Y)=g(X,\varphi_2Y)$. Then $\omega_2(X,\xi_1)=-\eta_3(X)$ holds.
\end{lemma}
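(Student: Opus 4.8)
The plan is to compute $\omega_2(X,\xi_1) = g(X, \varphi_2 \xi_1)$ directly and identify $\varphi_2 \xi_1$ using the structural relations in \autoref{-2.0001}. First I would apply the relation $\xi_\gamma = \varphi_\alpha \xi_\beta = -\varphi_\beta \xi_\alpha$ to a suitable permutation of $\{1,2,3\}$: taking $(\alpha,\beta,\gamma) = (2,1,3)$, which is an even permutation, gives $\xi_3 = \varphi_2 \xi_1$. Hence $\omega_2(X,\xi_1) = g(X, \varphi_2 \xi_1) = g(X, \xi_3)$.

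Next I would use the compatibility of the metric with the almost contact structure $(\varphi_3,\xi_3,\eta_3)$: for an almost contact metric structure one has $g(X,\xi_3) = \eta_3(X)$, which follows from setting $Y = \xi_3$ in the relation $g(\varphi_3 X, \varphi_3 Y) = g(X,Y) - \eta_3(X)\eta_3(Y)$ together with $\varphi_3 \xi_3 = 0$ and $\eta_3(\xi_3) = 1$ — or it can simply be taken as a standard fact about almost contact metric structures. This immediately yields $\omega_2(X,\xi_1) = \eta_3(X)$.

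The only subtlety is a sign/orientation bookkeeping issue: the statement claims $\omega_2(X,\xi_1) = -\eta_3(X)$, so I need to be careful about which permutation is "even" and whether the convention $\omega(X,Y) = g(X,\varphi Y)$ together with the relation $\xi_\gamma = -\varphi_\beta \xi_\alpha$ produces the minus sign. Rechecking: in the cyclic order $1 \to 2 \to 3$, the even permutations are $(1,2,3), (2,3,1), (3,1,2)$; the transposition-type arrangement $(2,1,3)$ is \emph{odd}, so I cannot directly write $\xi_3 = \varphi_2 \xi_1$. Instead, from the even permutation $(\alpha,\beta,\gamma) = (3,1,2)$ — wait, that gives $\xi_2 = \varphi_3\xi_1$; the right choice is the even permutation $(\alpha,\beta,\gamma)=(1,2,3)$ read off the relation $\xi_\gamma = -\varphi_\beta\xi_\alpha$, i.e. $\xi_3 = -\varphi_2 \xi_1$. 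Therefore $\varphi_2 \xi_1 = -\xi_3$, giving $\omega_2(X,\xi_1) = g(X,-\xi_3) = -\eta_3(X)$, as claimed.

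So the proof is essentially two lines: invoke $\varphi_2\xi_1 = -\xi_3$ from \autoref{-2.0001}, then use $g(X,\xi_3) = \eta_3(X)$. The main (minor) obstacle is purely notational — keeping the permutation parity and the sign conventions consistent with Definition~\ref{-2.0001} — and there is no real analytic or geometric difficulty.
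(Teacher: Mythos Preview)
Your argument is correct and considerably more direct than the paper's. Once you settle the parity bookkeeping and read off $\varphi_2\xi_1=-\xi_3$ from the even permutation $(\alpha,\beta,\gamma)=(1,2,3)$ in \autoref{-2.0001}, the identity $\omega_2(X,\xi_1)=g(X,\varphi_2\xi_1)=-g(X,\xi_3)=-\eta_3(X)$ is immediate, using the standard almost contact metric fact $g(\cdot,\xi_i)=\eta_i$.

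The paper takes a longer route: it first proves the auxiliary identity
\[
\omega_2(\varphi_3X,Y)=\omega_2(X,\varphi_3Y)-\eta_2(X)\eta_3(Y)-\eta_3(X)\eta_2(Y)
\]
by a chain of metric compatibility computations, and then substitutes $\xi_1=-\varphi_3\xi_2$ (from the even permutation $(2,3,1)$) together with $\omega_2(\cdot,\xi_2)=0$ to extract the result. Your approach bypasses this entirely by applying the structural relation to $\varphi_2\xi_1$ rather than to $\xi_1$ itself, which lands you on $\xi_3$ in one step. The paper's detour does yield the stronger intermediate identity above, but that identity is not used elsewhere, so nothing is lost by your shortcut. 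The exploratory narrative about the sign confusion could be trimmed in a final write-up, but the mathematics is sound.
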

\begin{proof}
    Since $\eta_\alpha(\xi_\gamma)=\eta_\alpha(\varphi_\alpha\xi_\beta)=0$, we obtain
    \[\begin{split}
        \omega_2(\varphi_3X,Y)&=g(\varphi_3X,\varphi_2Y) \\
       &=g(\varphi_3\varphi_3X,\varphi_3\varphi_2Y)+\eta_3(\varphi_3X)\eta_3(\varphi_2Y) \\
       &=g(-X+\eta_3(X)\xi_3,-\varphi_2\varphi_3Y+\eta_3(Y)\xi_2+\eta_2(Y)\xi_3) \\
       &=g(X,\varphi_2\varphi_3Y)-\eta_3(Y)\eta_2(X)-\eta_2(Y)\eta_3(X) \\
       &\ \ \ \ -\eta_3(X)\eta_3(\varphi_2\varphi_3Y)+\eta_3(X)\eta_3(Y)\eta_3(\xi_2)+\eta_3(X)\eta_2(Y)\eta_3(\xi_3) \\
       &=g(X,\varphi_2\varphi_3Y)-\eta_2(X)\eta_3(Y)-\eta_3(X)\eta_3(-\varphi_3\varphi_2Y+\eta_3(Y)\xi_2+\eta_2(Y)\xi_3) \\
       &=g(X,\varphi_2\varphi_3Y)-\eta_2(X)\eta_3(Y)-\eta_3(X)\eta_2(Y) \\
       &=\omega_2(X,\varphi_3Y)-\eta_2(X)\eta_3(Y)-\eta_3(X)\eta_2(Y).
   \end{split}\]
   Therefore we have
    \[\begin{split}
        \omega_2(X,\xi_1)&=-\omega_2(X,\varphi_3\xi_2) \\
       &=-\omega_2(\varphi_3X,\xi_2)-\eta_2(X)\eta_3(\xi_2)-\eta_3(X)\eta_2(\xi_2) \\
       &=-\eta_3(X).
   \end{split}\]
\end{proof}

Let $G$ be a Lie group acting on a 3-cosymplectic manifold $(M,g,(\varphi_i,\xi_i,\eta_i)_{i=1,2,3})$. Suppose that the action is Hamiltonian with respect to all three cosymplectic structures $(\eta_i,\omega_i)_{i=1,2,3}$, and let $\mu_i:M\to\mathfrak{g}^\ast\ (i=1,2,3)$ be moment maps.

\begin{lemma}\label{1}
    Let $\zeta_2,\zeta_3\in\mathfrak{g}^\ast$ be regular values of $\mu_2,\mu_3$, respectively. Suppose that submanifolds $\mu_2^{-1}(\zeta_2)$ and $\mu_3^{-1}(\zeta_3)$ intersect transversally. Then $N:=\mu^{-1}_2(\zeta_2)\cap\mu^{-1}_3(\zeta_3)$ is an almost contact submanifold of $(M,\varphi_1,\xi_1,\eta_1)$.
\end{lemma}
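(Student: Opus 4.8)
The goal is to show that $N = \mu_2^{-1}(\zeta_2)\cap\mu_3^{-1}(\zeta_3)$, which is a submanifold by the transversality hypothesis, is invariant under $\varphi_1$; together with the facts $\xi_1\in TN$ and $\eta_1(\xi_1)=1$ (which are inherited from $M$), this makes $(\varphi_1,\xi_1,\eta_1)|_N$ an almost contact structure on $N$. So the plan is to compute, for $p\in N$, the tangent space $T_pN$ and check $\varphi_1(T_pN)\subset T_pN$.

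First I would record the two tangent-space descriptions coming from the moment map equations, exactly as in \eqref{-2.1}: for $p\in\mu_i^{-1}(\zeta_i)$ one has $T_p\mu_i^{-1}(\zeta_i)=\{X\in T_pM\mid \omega_i(A^\ast_p,X)=0\ \text{for all } A\in\mathfrak g\}$, i.e. $g(A^\ast_p,\varphi_i X)=0$ for all $A$, equivalently $\varphi_i X\perp \mathfrak g_p$. Hence $T_pN=\{X\in T_pM\mid \varphi_2X\perp\mathfrak g_p\ \text{and}\ \varphi_3X\perp\mathfrak g_p\}$. I also need $\xi_1\in T_pN$: since $\omega_2(X,\xi_1)=-\eta_3(X)$ by \autoref{0} (and symmetrically $\omega_3(X,\xi_1)=\eta_2(X)$ by the analogous computation), we get $\omega_2(A^\ast_p,\xi_1)=-\eta_3(A^\ast_p)=0$ and $\omega_3(A^\ast_p,\xi_1)=\eta_2(A^\ast_p)=0$ because the moment map conditions force $d\mu_i^A(\xi_j)$-type relations — more directly, $\eta_i(A^\ast)=\eta_i(\flat_i^{-1}(d\mu_i^A))$, and since $\flat_i^{-1}(d\mu_i^A)=X_{\mu_i^A}$ lies in $\mathrm{Ker}\,\eta_i$ we need the cross terms; I would instead argue $\eta_3(A^\ast_p)=\omega_2(\xi_1,A^\ast_p)=-\omega_2(A^\ast_p,\xi_1)$ and use that $A^\ast_p\in T_p\mu_2^{-1}(\zeta_2)$ (true since the action preserves everything and $\zeta_2$ is assumed central in the relevant set-up, or at worst for $A\in\mathfrak g_{\zeta_2}$) to see this vanishes — this bookkeeping point needs care.

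The heart of the matter is the invariance $\varphi_1(T_pN)\subseteq T_pN$. Take $X\in T_pN$; I must show $\varphi_2(\varphi_1X)\perp\mathfrak g_p$ and $\varphi_3(\varphi_1X)\perp\mathfrak g_p$. Using the quaternionic relations from \autoref{-2.0001}, $\varphi_2\varphi_1 = -\varphi_3 + \eta_1\otimes\xi_2$ (from $\varphi_3=\varphi_1\varphi_2-\eta_2\otimes\xi_1=-\varphi_2\varphi_1+\eta_1\otimes\xi_2$). Therefore $\varphi_2(\varphi_1 X) = -\varphi_3 X + \eta_1(X)\xi_2$. Now $\varphi_3X\perp\mathfrak g_p$ because $X\in T_p\mu_3^{-1}(\zeta_3)$, and $\xi_2\perp\mathfrak g_p$ since $g(\xi_2,A^\ast_p)=\eta_2(A^\ast_p)=0$ (the moment-map/invariance argument above). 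Hence $\varphi_2(\varphi_1X)\perp\mathfrak g_p$. Symmetrically, $\varphi_3\varphi_1 = \varphi_2 - \eta_1\otimes\xi_3$ (from $\varphi_2=\varphi_3\varphi_1-\eta_1\otimes\xi_3$), so $\varphi_3(\varphi_1X)=\varphi_2X-\eta_1(X)\xi_3$, and both terms are orthogonal to $\mathfrak g_p$ for the same reasons. This gives $\varphi_1X\in T_pN$.

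Finally I would assemble: $\dim N$ is even plus one (it has odd dimension since $\mathrm{codim}_M \mu_i^{-1}(\zeta_i)=\dim G$ each and by transversality $\mathrm{codim}_M N=2\dim G$, while $\dim M$ is odd), $\varphi_1|_{T N}$ squares to $-\mathrm{id}+\eta_1\otimes\xi_1$ on $TN$, and $\eta_1(\xi_1)=1$ with $\xi_1\in TN$ — so $(N,\varphi_1|_N,\xi_1|_N,\eta_1|_N)$ is an almost contact submanifold, which is the claim. The main obstacle I anticipate is not the quaternionic algebra (which is short once the right relation is picked) but the careful justification that $\xi_2,\xi_3$, and more generally the vectors $A^\ast_p$, behave correctly relative to both level sets — i.e. pinning down exactly which centrality/invariance hypothesis is in force so that $\eta_j(A^\ast_p)=0$ and $A^\ast_p\in T_pN$ can be used; everything else is a direct consequence of \autoref{0}, its mirror for $\omega_3$, and \eqref{-2.1}.
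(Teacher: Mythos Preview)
Your argument is correct and is essentially the paper's proof, organized in terms of the orthogonality condition $\varphi_iX\perp\mathfrak g_p$ instead of the equivalent $d\mu_i^A(X)=0$; the key step in both is the relation $\varphi_2\varphi_1=-\varphi_3+\eta_1\otimes\xi_2$ (and its mirror) together with $\eta_2(A^\ast)=0$, just as the paper writes it.

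Your expressed worry about the ``bookkeeping'' is unfounded and you should not introduce centrality or $A^\ast\in T_pN$ to resolve it. The vanishing $\eta_i(A^\ast)=0$ for \emph{each} $i=1,2,3$ is immediate from the standing hypothesis that the action is Hamiltonian with respect to all three cosymplectic structures: by definition $A^\ast$ is the Hamiltonian vector field of $\mu_i^A$ for $(\eta_i,\omega_i)$, and Hamiltonian vector fields satisfy $\eta_i(X_f)=0$. You already spotted this (``$X_{\mu_i^A}$ lies in $\mathrm{Ker}\,\eta_i$'') and then talked yourself out of it; there are no ``cross terms'' to worry about, and the alternative circular argument via $A^\ast_p\in T_p\mu_2^{-1}(\zeta_2)$ is unnecessary. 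With that one observation in place, your proof of both $\xi_1\in TN$ and $\varphi_1(TN)\subset TN$ is complete and matches the paper exactly.
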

\begin{proof}
    Using \autoref{0}, we have
    \[d\mu_2^A(\xi_1)=\omega_2(A^\ast,\xi_1)=-\eta_3(A^\ast)=0,\]
    and similarly $d\mu_3^A(\xi_1)=0$ holds, thus $\xi_1\in TN$.
    Next, we check that $\varphi_1(TN)\subset TN$. Suppose that $d\mu_3(X)=0.$ Then we obtain
    \[\begin{split}
        0&=d\mu_3^A(X)=\omega_3(A^\ast,X)=g(A^\ast,\varphi_3X) \\
       &=g(A^\ast,-\varphi_2\varphi_1X+\eta_1(X)\xi_2) \\
       &=-g(A^\ast,\varphi_2\varphi_1X)+\eta_1(X)\eta_2(A^\ast) \\
       &=-\omega_2(A^\ast,\varphi_1X)=-d\mu_2^A(\varphi_1X).
   \end{split}\]
    Similarly, we can see that $d\mu_2(X)=0$ implies $d\mu^A_3(\varphi_1X)=0$, hence $\varphi_1X\in TN$ holds for any $X\in TN$.
\end{proof}

We define a 3-cosymplectic moment map $\mu:M\to\mathfrak{g}^\ast\otimes{\rm Im}\mathbb{H}$ by
\[\mu=\mu_1i+\mu_2j+\mu_3k,\]
where $i,j,k$ are generators of ${\rm Im}\mathbb{H}$. Let $G$ act on $\mathfrak{g}^\ast\otimes{\rm Im}\mathbb{H}$ by the tensor representation of the coadjoint action and the trivial action on ${\rm Im}\mathbb{H}$. Then for any $\zeta=\zeta_1i+\zeta_2j+\zeta_3k\in\mathfrak{g}^\ast\otimes{\rm Im}\mathbb{H}$, the $G_\zeta$-action preserves $\mu^{-1}(\zeta)=\mu^{-1}_1(\zeta_1)\cap\mu^{-1}_2(\zeta_2)\cap\mu^{-1}_3(\zeta_3)$.
\begin{theorem}\label{200}
    Let $(M,g,(\varphi_i,\xi_i,\eta_i)_{i=1,2,3})$ be a 3-cosymplectic manifold with underlying cosymplectic structures $(\eta_i,\omega_i)_{i=1,2,3}$. Suppose that there is a free and proper action of a Lie group $G$ on $M$ which is Hamiltonian with respect to all three cosymplectic structures $(\eta_i,\omega_i)_{i=1,2,3}$ and preserves $(\varphi_i)_{i=1,2,3}$. Let $\mu:M\to\mathfrak{g}^\ast\otimes{\rm Im}\mathbb{H}$ be a 3-cosymplectic moment map and $\zeta\in\mathfrak{g}^\ast\otimes{\rm Im}\mathbb{H}$ a central and regular value of $\mu$. Then $M^\zeta:=\mu^{-1}(\zeta)/G$ inherits the 3-cosymplectic structure of $M$. 
\end{theorem}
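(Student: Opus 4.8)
The plan is to follow the proof of \autoref{100}, the only genuinely new ingredient being a quaternionic refinement of the splitting in \autoref{-2.0003}. Throughout, write $N:=\mu^{-1}(\zeta)=\mu_1^{-1}(\zeta_1)\cap\mu_2^{-1}(\zeta_2)\cap\mu_3^{-1}(\zeta_3)$, which is a submanifold since $\zeta$ is a regular value of $\mu$; let $\pi\colon N\to M^\zeta$ be the projection and put $\mathfrak{g}_p=\{A_p^\ast\mid A\in\mathfrak{g}\}$. Two elementary remarks will be used repeatedly. First, since the action is Hamiltonian for each $(\eta_i,\omega_i)$, the Hamiltonian vector fields $A^\ast$ satisfy $\eta_i(A^\ast)=0$. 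Second, since $\zeta$ is central, $G$ preserves each $\mu_i^{-1}(\zeta_i)$, so $\mathfrak{g}_p\subset T_pN\subset T_p\mu_i^{-1}(\zeta_i)$ and hence $\omega_i(A_p^\ast,v)=d\mu_i^A(v)=0$ for every $v\in T_pN$; in particular $\omega_i(A_p^\ast,B_p^\ast)=0$.

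The main step is to prove, for every $p\in N$, the $g$-orthogonal decomposition
\[
T_pM=H_p\oplus\mathfrak{g}_p\oplus\varphi_1(\mathfrak{g}_p)\oplus\varphi_2(\mathfrak{g}_p)\oplus\varphi_3(\mathfrak{g}_p),
\]
where $H_p$ is invariant under each $\varphi_i$, contains $\xi_1,\xi_2,\xi_3$, and satisfies $H_p\oplus\mathfrak{g}_p=T_pN$ (so $H_p$ is the orthogonal complement of the orbit direction inside $T_pN$). Each $\varphi_i$ is injective on $\textrm{Ker}\,\eta_i\supset\mathfrak{g}_p$, so $\dim\varphi_i(\mathfrak{g}_p)=\dim G$; mutual orthogonality of the four summands on the right follows from $g(A^\ast,\varphi_iB^\ast)=\omega_i(A^\ast,B^\ast)=0$ together with, for $i\neq j$, the identities
\[
g(\varphi_\alpha X,\varphi_\beta Y)=-g(X,\varphi_\gamma Y)-\eta_\alpha(X)\eta_\beta(Y),\qquad g(\varphi_\alpha X,\varphi_\gamma Y)=g(X,\varphi_\beta Y)-\eta_\alpha(X)\eta_\gamma(Y)
\]
(valid for every even permutation $(\alpha,\beta,\gamma)$ of $\{1,2,3\}$, derived from \autoref{-2.0001} exactly as in \autoref{0}), which reduce $g(\varphi_iA_p^\ast,\varphi_jB_p^\ast)$ to $\pm\,\omega_k(A_p^\ast,B_p^\ast)=0$ because every correction term contains a factor of the form $\eta_l(\cdot)$ evaluated on an element of $\mathfrak{g}_p$. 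Defining $H_p$ as the orthogonal complement of $V_p:=\mathfrak{g}_p\oplus\varphi_1(\mathfrak{g}_p)\oplus\varphi_2(\mathfrak{g}_p)\oplus\varphi_3(\mathfrak{g}_p)$, comparing dimensions and using $T_pN=\bigcap_i(\varphi_i(\mathfrak{g}_p))^{\perp}$ gives the decomposition together with $H_p\oplus\mathfrak{g}_p=T_pN$. Applying the same two identities, plus $g(\varphi_iX,\varphi_iY)=g(X,Y)-\eta_i(X)\eta_i(Y)$, with one entry in $H_p$ and one in $V_p$ — again the correction terms all vanish, carrying a factor $\eta_l$ evaluated on $\mathfrak{g}_p$ — shows $\varphi_i(H_p)\perp V_p$, i.e.\ $\varphi_i(H_p)\subset H_p$; and $\xi_i\in H_p$ follows from $\eta_i(A^\ast)=0$ together with $\omega_i(\xi_i,-)=0$ and the \autoref{0}-type identities $\omega_\beta(\xi_\alpha,-)=\pm\,\eta_\gamma$. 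I expect this step to be the main obstacle: one must verify that \emph{a single} horizontal subspace is simultaneously orthogonal to all four pieces and invariant under all three $\varphi_i$, and this is precisely where the quaternionic relations of \autoref{-2.0001} have to be combined with the moment-map conditions.

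Once this splitting is available, the rest runs parallel to \autoref{100}, handled one index $i$ at a time. As in \autoref{-2.00011}, $H_p$ is $G$-invariant: the action preserves each $\varphi_i$ and the metric, so $(L_h)_\ast\varphi_i(\mathfrak{g}_p)=\varphi_{hp}(\mathfrak{g}_{hp})$ and $(L_h)_\ast\mathfrak{g}_p=\mathfrak{g}_{hp}$, whence $(L_h)_\ast H_p=H_{hp}$. Then $(d\pi)_p|_{H_p}\colon H_p\to T_{\pi(p)}M^\zeta$ is a $G$-equivariant isomorphism, and I would push forward $g|_{H_p}$ and $\varphi_i|_{H_p}$ to the quotient metric $g^\zeta$ of the Riemannian submersion $N\to M^\zeta$ and to $\varphi_i^\zeta\in\textrm{End}(TM^\zeta)$, set $\xi_i^\zeta:=d\pi(\xi_i|_N)$, and let $\eta_i^\zeta$ be the $1$-form with $\pi^\ast\eta_i^\zeta=\eta_i|_N$ (the forms $\eta_i|_N$, $\omega_i|_N$ being basic for $\pi$, exactly as in \autoref{-3}, since $d\eta_i=d\omega_i=0$, $\iota_{A^\ast}\eta_i=0$ and $\iota_{A^\ast}\omega_i|_N=0$). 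Because $H_p$ is $\varphi_i$-invariant and contains $\xi_i$, the isomorphism $(d\pi)_p|_{H_p}$ intertwines $(\varphi_i,\xi_i,\eta_i|_N)$ with $(\varphi_i^\zeta,\xi_i^\zeta,\eta_i^\zeta)$, so each $(g^\zeta,\varphi_i^\zeta,\xi_i^\zeta,\eta_i^\zeta)$ is an almost contact metric structure with fundamental $2$-form $\omega_i^\zeta$, and the relations of \autoref{-2.0001} descend simply by pushing forward the corresponding identities restricted to $H_p$. Finally, since $H_p\subset T_pN$ one has $\textrm{pr}_H\circ\textrm{pr}_{TN}=\textrm{pr}_H$, so the submersion formula $\nabla^\zeta_XY=d\pi(\textrm{pr}_H(\nabla_{\widetilde X}\widetilde Y))$ holds just as in \autoref{100}, now for $N$; combined with $\nabla\varphi_i=0$ it yields $\nabla^\zeta\varphi_i^\zeta=0$, so each $(g^\zeta,\varphi_i^\zeta,\xi_i^\zeta,\eta_i^\zeta)$ is coK\"ahler by the characterization recalled in the proof of \autoref{100}. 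This produces the desired $3$-cosymplectic structure $(g^\zeta,(\varphi_i^\zeta,\xi_i^\zeta,\eta_i^\zeta)_{i=1,2,3})$ on $M^\zeta$, with underlying cosymplectic structures $(\eta_i^\zeta,\omega_i^\zeta)$.
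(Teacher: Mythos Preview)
Your argument is correct, but the route is genuinely different from the paper's. The paper does \emph{not} build a quaternionic horizontal decomposition on $\mu^{-1}(\zeta)$. Instead, for each index $i$ it passes through the larger intermediate submanifold $N_i:=\mu_j^{-1}(\zeta_j)\cap\mu_k^{-1}(\zeta_k)$ (with $\{i,j,k\}=\{1,2,3\}$), invokes \autoref{1} plus a cited result of Ludden to see that $(N_i,g|_{N_i},\varphi_i|_{N_i},\xi_i|_{N_i},\eta_i|_{N_i})$ is coK\"ahler, and then applies \autoref{100} to the coK\"ahler manifold $N_i$ with the single moment map $\mu_i|_{N_i}$; this is repeated for $i=1,2,3$, and the three resulting quotient metrics are identified with the common quotient metric of $\mu^{-1}(\zeta)\to M^\zeta$. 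Your direct approach trades that two--step, reference--dependent argument for a single self--contained splitting $T_pM=H_p\oplus\mathfrak{g}_p\oplus\varphi_1(\mathfrak{g}_p)\oplus\varphi_2(\mathfrak{g}_p)\oplus\varphi_3(\mathfrak{g}_p)$: you pay with more orthogonality bookkeeping (the cross--terms $g(\varphi_iA^\ast,\varphi_jB^\ast)$ and the $\varphi_i$--invariance of $H_p$), but you gain that the quaternionic relations of \autoref{-2.0001} and the equality of the three reduced metrics are immediate, since everything is pushed forward through the same isomorphism $(d\pi)_p|_{H_p}$. The paper's route is shorter once \autoref{100} and Ludden's theorem are available; yours is more transparent about why a single horizontal bundle carries all three structures simultaneously, and avoids the external citation.
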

\begin{proof}
    \autoref{1} implies that there is an almost contact structure $(\varphi_1|_N,\xi_1|_N,\eta_1|_N)$ on $N$. Therefore from a result in \cite{ludden1970submanifolds}, we can see that $(N,g|_N,\varphi_1|_N,\xi_1|_N,\eta_1|_N)$ is a coK\"{a}hler manifold. 
    
    The action of $G$ on $N$ preserves the coK\"{a}hler structure, and $\mu_1|_N$ is a moment map for this action. So \autoref{100} implies that there is a coK\"{a}hler structure $(g_1^\zeta,\varphi_1^\zeta,\xi_1^\zeta,\eta_1^\zeta)$ on $M^\zeta=(N\cap\mu^{-1}_1(\zeta_1))/G$. Similarly, we obtain two more coK\"{a}hler structures $(g_2^\zeta,\varphi_2^\zeta,\xi_2^\zeta,\eta_2^\zeta)$ and $(g_3^\zeta,\varphi_3^\zeta,\xi_3^\zeta,\eta_3^\zeta)$ on $M^\zeta$.
    
    Each Riemannian metric $g_i^\zeta\ (i=1,2,3)$ coincides with the quotient metric of the principal bundle $\mu^{-1}(\zeta)\to M^\zeta$, so three Riemannian metrics $g_1^\zeta,g_2^\zeta,g_3^\zeta$ are the same, and thus the three coK\"{a}hler structures constitutes a 3-cosymplectic structure on $M^\zeta$.
\end{proof}

\section{CoK\"{a}hler/3-cosymplectic reductions and cone constructions}\label{cone}

When $M$ is endowed with a geometric structure we are studying, it induces a geometric structure on the cone $C(M):=M\times\mathbb{R}$ as shown in the table below (in the case that $M$ is hyperK\"{a}hler, we use $C^3(M):=M\times\mathbb{R}^3$ instead of the cone). In this section, we prove that reduction procedures are compatible with these constructions.

\vspace{0.1in}
\begin{center}
    \begin{threeparttable}[htbp]
        \begin{tabular}{|c|c|}
            \hline
            Structure on the base&Induced structure on the cone\\ 
          \hline
          \hline
          K\"{a}hler&CoK\"{a}hler\\
          \hline
          CoK\"{a}hler&K\"{a}hler\\ 
          \hline
          HyperK\"{a}hler&3-cosymplectic\\ 
          \hline
          3-cosymplectic&HyperK\"{a}hler\\ 
          \hline
        \end{tabular}
    \end{threeparttable}
\end{center}

\vspace{0.1in}
Let $(M,h,J)$ be a K\"{a}hler manifold ($h$ denotes the Riemannian metric and $J$ denotes the complex structure). Then $C(M):=M\times\mathbb{R}$ admits a natural coK\"{a}hler structure $(g,\varphi,\xi,\eta)$ defined by 
\[g=h+dt^2,\qquad\varphi\Bigl(X,f\displaystyle\frac{\partial}{\partial t}\Bigr)=(JX,0),\]

\[\xi=\displaystyle\frac{\partial}{\partial t},\qquad\eta=dt,\]
where $t$ is the coordinate of $\mathbb{R}$ and $f\in C^\infty(C(M))$.

\begin{example}[\cite{fujimoto1974cosymplectic}]\label{1.1}
    Let $f$ be a Hermitian isometry on $(M,h,J)$. Define an action of $\mathbb{Z}$ on $C(M)$ by $k\cdot(p,t):=(f^k(p),t+k)$. This action is free and properly discontinuous, so $C(M)/\mathbb{Z}$ is a smooth manifold. Then $C(M)/\mathbb{Z}$ inherits a coK\"{a}hler structure. This is a coK\"{a}hler quotient for a trivial moment map on $C(M)$. $C(M)/\mathbb{Z}$ is diffeomorphic to the mapping torus of $M$ with respect to $f$, and we will discuss the coK\"{a}hler structure on it in detail later.$\End$
\end{example}

Assume that there is a Hamiltonian action of $G$ on $M$ preserving $J$, and let $\mu:M\to\mathfrak{g}^\ast$ be a moment map. We define an action of $G$ on $C(M)$ by $g\cdot(p,t):=(gp,t)$. Then this action preserves the coK\"{a}hler structure on $C(M)$, and is a Hamiltonian action whose moment map is $\widetilde{\mu}=\mu\circ\textrm{pr}_M.$
\begin{proposition}\label{2}
     When a K\"{a}hler quotient $\mu^{-1}(\zeta)/G$ is defined for $\zeta\in\mathfrak{g}^\ast$, a coK\"{a}hler quotient $\widetilde{\mu}^{-1}(\zeta)/G$ is also defined. Moreover, $C(\mu^{-1}(\zeta)/G)$ and $\widetilde{\mu}^{-1}(\zeta)/G$ are equivalent as coK\"{a}hler manifolds.
\end{proposition}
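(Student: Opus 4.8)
The plan is to realize both coK\"ahler manifolds in the statement as the \emph{same} quotient, built from the same horizontal data, and then to verify that the obvious diffeomorphism between them intertwines the four structure tensors $(g,\varphi,\xi,\eta)$; since \autoref{100} already guarantees that each side carries a genuine coK\"ahler structure, this is all that is needed. First I would clear away the elementary points. As $G$ acts trivially on the $\mathbb{R}$-factor of $C(M)$, the fundamental vector field of $A\in\mathfrak{g}$ on $C(M)$ is $(A^\ast,0)$, and the explicit formulas for $(g,\varphi,\xi,\eta)$ give $\eta=dt$ and $\omega=\textrm{pr}_M^\ast\omega_h$, where $\omega_h(X,Y)=h(X,JY)$ is the K\"ahler form; from this the three conditions defining a cosymplectic moment map for $\widetilde{\mu}$ are immediate (as already observed before the statement). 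Moreover $\widetilde{\mu}^{-1}(\zeta)=\mu^{-1}(\zeta)\times\mathbb{R}$, so $\zeta$ is a regular value, resp. central value, of $\widetilde{\mu}$ exactly when it is one for $\mu$, and the $G$-action on $\widetilde{\mu}^{-1}(\zeta)$ is free and proper exactly when the $G$-action on $\mu^{-1}(\zeta)$ is. Hence whenever the K\"ahler quotient $\mu^{-1}(\zeta)/G$ is defined, so is the coK\"ahler quotient $\widetilde{\mu}^{-1}(\zeta)/G$, and $\textrm{pr}_M$, $\textrm{pr}_{\mathbb{R}}$ descend to a diffeomorphism
\[\Phi\colon\widetilde{\mu}^{-1}(\zeta)/G\xrightarrow{\ \sim\ }\bigl(\mu^{-1}(\zeta)/G\bigr)\times\mathbb{R}=C\bigl(\mu^{-1}(\zeta)/G\bigr),\qquad\Phi([p,t])=([p],t).\]

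The one geometric input is a compatibility of the horizontal distributions. Fix $(p,t)\in\widetilde{\mu}^{-1}(\zeta)$. Then $\mathfrak{g}_{(p,t)}=\mathfrak{g}^M_p\times\{0\}$ and $\varphi_{(p,t)}(\mathfrak{g}_{(p,t)})=J(\mathfrak{g}^M_p)\times\{0\}$, while $T_{(p,t)}\widetilde{\mu}^{-1}(\zeta)=T_p\mu^{-1}(\zeta)\times\mathbb{R}$. Since $g=h+dt^2$ and $\mathfrak{g}_{(p,t)}$ sits in the $T_pM$-summand, the subspace $H_{(p,t)}$ from \autoref{-2.0003} --- the $g$-orthogonal complement of $\mathfrak{g}_{(p,t)}$ inside $T_{(p,t)}\widetilde{\mu}^{-1}(\zeta)$ --- equals $H^M_p\times\mathbb{R}\langle\partial_t\rangle$, where $H^M_p=(\mathfrak{g}^M_p\oplus J\mathfrak{g}^M_p)^{\perp}$ is exactly the horizontal space of the K\"ahler reduction of $M$ at $p$; in particular $H^M_p$ is $J$-invariant. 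Under $\Phi$ this identifies a horizontal vector $(\widetilde{X},f\partial_t)$, with $\widetilde{X}\in H^M_p$, with the vector $(d\pi_M\widetilde{X},f\partial_t)$ of $C(\mu^{-1}(\zeta)/G)$.

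Now I would check tensor by tensor that $\Phi$ is a coK\"ahler isomorphism. The Reeb fields on both sides are $\partial_t$. For $\eta$ and $\omega$, injectivity of $\pi^\ast$ reduces the comparison to a pullback along the quotient map, where both sides become $dt|_{\widetilde{\mu}^{-1}(\zeta)}$ and $\textrm{pr}_M^\ast(\omega_h|_{\mu^{-1}(\zeta)})=\textrm{pr}_M^\ast\pi_M^\ast\omega_h^\zeta$ (using the defining property $\pi_M^\ast\omega_h^\zeta=\omega_h|_{\mu^{-1}(\zeta)}$ of the reduced K\"ahler form), so $\Phi^\ast\eta_{\mathrm{cone}}=\eta^\zeta$ and $\Phi^\ast\omega_{\mathrm{cone}}=\omega^\zeta$. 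For $g$ and $\varphi$, evaluating the quotient formulas of \autoref{100} on horizontal lifts $(\widetilde{X},f\partial_t)\in H^M_p\times\mathbb{R}\langle\partial_t\rangle$ and using $g=h+dt^2$, $\varphi(\widetilde{X},f\partial_t)=(J\widetilde{X},0)$, and $d\pi_M(J\widetilde{X})=J^\zeta(d\pi_M\widetilde{X})$ yields, after identification via $\Phi$, $g^\zeta=h^\zeta+dt^2$ and $\varphi^\zeta(X,f\partial_t)=(J^\zeta X,0)$ --- precisely the cone metric and cone endomorphism attached to the K\"ahler quotient $(h^\zeta,J^\zeta)$ of $M$.

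The only part that is more than bookkeeping is the horizontal-distribution identification $H_{(p,t)}=H^M_p\times\mathbb{R}\langle\partial_t\rangle$; once it is established, the remaining checks reduce to unwinding the explicit cone formulas and the quotient formulas of \autoref{-2.0003} and \autoref{100}. One minor point deserves a remark: \autoref{100} is phrased for a globally free and proper action, whereas here one only needs freeness and properness along $\widetilde{\mu}^{-1}(\zeta)$; but its proof uses only the local geometry near $\mu^{-1}(\zeta)$, so it applies verbatim.
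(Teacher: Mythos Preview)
Your proof is correct and follows essentially the same approach as the paper: both arguments identify the horizontal distribution $H_{(p,t)}$ of the coK\"ahler reduction with $H^M_p\times\mathbb{R}\langle\partial_t\rangle$ (the paper phrases this via a commutative diagram of projections rather than a direct orthogonality computation, but the content is identical), and then verify tensor by tensor that the reduced structures $(g^\zeta,\varphi^\zeta,\xi^\zeta,\eta^\zeta)$ match the cone structures built from $(h^\zeta,J^\zeta)$. Your writeup is in fact slightly more explicit than the paper's in spelling out why the coK\"ahler quotient is defined and in checking $\eta^\zeta$ and $\omega^\zeta$ via pullback.
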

\begin{proof}
Clearly $C(\mu^{-1}(\zeta)/G)$ and $\widetilde{\mu}^{-1}(\zeta)/G$ are diffeomorphic, and the diagram
\[\xymatrix{\widetilde{\mu}^{-1}(\zeta)\ar[d]_-{\textrm{pr}}\ar[r]^-{\widetilde{\pi}}&C(\mu^{-1}(\zeta)/G)\ar[d]^-{\textrm{pr}}\\
\mu^{-1}(\zeta)\ar[r]_-{\pi}&{\mu}^{-1}(\zeta)/G
    }\]
commutes, where $\textrm{pr}$ denotes natural projections and $\widetilde{\pi}$ is the projection of coK\"{a}hler reduction.
We orthogonally decompose $T_p\mu^{-1}(\zeta)$ and $T_{(p,t)}\widetilde{\mu}^{-1}(\zeta)$ as 
\[T_p\mu^{-1}(\zeta)=H_p\oplus\mathfrak{g}_p,\]
\[T_{(p,t)}\widetilde{\mu}^{-1}(\zeta)=H'_{(p,t)}\oplus\mathfrak{g}_{(p,t)},\]
respectively. Then we can easily check that $(\textrm{pr})_\ast(\mathfrak{g}_{(p,t)})=\mathfrak{g}_p$ and thus obtain the following commutative diagram.
\[\xymatrix{H'_{(p,t)}\ar[d]_-{(\textrm{pr})_\ast}\ar[r]^-{d\widetilde{\pi}}_-{\simeq}&T_{\widetilde{\pi}(p,t)}C(M)^\zeta\ar[d]^-{(\textrm{pr})_\ast}\\
H_p\ar[r]_-{d\pi}^-{\simeq}&T_{\pi(p)}M^\zeta
}\]

We denote the lift of $X\in T_{\pi(p)}M^\zeta$ and $(X,f\frac{\partial}{\partial r})\in T_{\widetilde{\pi}(p,t)}C(M)^\zeta$ as $\widetilde{X},\widetilde{(X,f\frac{\partial}{\partial r})}$, respectively. Let $r$ be the coordinate of $\mathbb{R}$ in $C({\mu}^{-1}(\zeta)/G)$. Since $d\pi(\textrm{pr})_\ast\widetilde{\frac{\partial}{\partial r}}=(\textrm{pr})_\ast{\frac{\partial}{\partial r}}=0$, we get $(\textrm{pr})_\ast\widetilde{\frac{\partial}{\partial r}}=0$. Hence $\widetilde{\frac{\partial}{\partial r}}=a\xi$ holds for some $a\in\mathbb{R}^\times$. We normalize the coordinate $r$ to satisfy $\widetilde{\frac{\partial}{\partial r}}=\xi$. Then we obtain $\xi^\zeta=\frac{\partial}{\partial r}$ and $\eta^\zeta=dr$.

Since $\widetilde{\frac{\partial}{\partial r}}=\xi$, 
\[\varphi\widetilde{\Bigl(X,f\displaystyle\frac{\partial}{\partial r}\Bigr)}=\varphi\Bigl(\widetilde{X},\widetilde{f\displaystyle\frac{\partial}{\partial r}}\Bigr)=(J\widetilde{X},0)\]
holds. Hence from the diagram above, we obtain
\[\varphi^\zeta\Bigl(X,f\displaystyle\frac{\partial}{\partial r}\Bigr)=d\widetilde{\pi}\varphi\widetilde{\Bigl(X,f\displaystyle\frac{\partial}{\partial r}\Bigr)}=(d\pi(J\widetilde{X}),0)=(J^\zeta X,0).\]

We can also see that $g^\zeta=h^\zeta+dr^2$, therefore the coK\"{a}hler structure $(g^\zeta,\varphi^\zeta,\xi^\zeta,\eta^\zeta)$ coincides with one obtained by the cone construction 
\[C({\mu}^{-1}(\zeta)/G)=({\mu}^{-1}(\zeta)/G)\times\mathbb{R}.\]
\end{proof}

\vspace{0.1in}
Conversely, for a given coK\"{a}hler manifold $(M,g,\varphi,\xi,\eta)$, we can define a K\"{a}hler structure $(h,J)$ on the cone $C(M)$ by
\[h=g+dt^2,\qquad J\Bigl(X,f\displaystyle\frac{\partial}{\partial t}\Bigr)=\Bigl(\varphi X-f\xi,\eta(X)\displaystyle\frac{\partial}{\partial t}\Bigr).\]
Assume that there is a Hamiltonian action of $G$ on $M$ preserving the coK\"{a}hler structure, and let $\mu:M\to\mathfrak{g}^\ast$ be a moment map. We define an action of $G$ on $C(M)$ in the same way as in \autoref{2}. Then the action preserves the K\"{a}hler structure on $C(M)$ and $\widetilde{\mu}=\mu\circ\textrm{pr}_M$ is a moment map. We obtain the following.
\begin{proposition}\label{2.01}
    When a coK\"{a}hler quotient $\mu^{-1}(\zeta)/G$ is defined for $\zeta\in\mathfrak{g}^\ast$, a K\"{a}hler quotient $\widetilde{\mu}^{-1}(\zeta)/G$ is also defined. Moreover, $C(\mu^{-1}(\zeta)/G)$ and $\widetilde{\mu}^{-1}(\zeta)/G$ are equivalent as K\"{a}hler manifolds.
\end{proposition}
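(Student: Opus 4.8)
The plan is to run the proof of \autoref{2} in reverse, exchanging the roles of the Kähler and coKähler sides. Since $\widetilde\mu = \mu\circ\textrm{pr}_M$, we have $\widetilde\mu^{-1}(\zeta) = \mu^{-1}(\zeta)\times\mathbb{R}$; because $d\,\textrm{pr}_M$ is everywhere surjective, $\zeta$ is a regular value of $\widetilde\mu$ precisely when it is one of $\mu$, and since $G$ acts trivially on the $\mathbb{R}$-factor one has $G_\zeta = G$ in both cases. Together with the freeness and properness of the induced action on $C(M)$ (immediate from the corresponding properties on $M$) and the fact that it preserves $(h,J)$, this shows the Kähler quotient $\widetilde\mu^{-1}(\zeta)/G$ is defined, and the diffeomorphism
\[\widetilde\mu^{-1}(\zeta)/G = (\mu^{-1}(\zeta)\times\mathbb{R})/G \cong (\mu^{-1}(\zeta)/G)\times\mathbb{R} = C(\mu^{-1}(\zeta)/G)\]
is clear. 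As in \autoref{2}, this fits into commutative squares relating the two reduction projections $\widetilde\pi,\pi$ and the two cone projections.

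The next step is to identify horizontal spaces. The vector field on $C(M)$ induced by $A\in\mathfrak{g}$ is $(A^\ast,0)$, so $\mathfrak{g}_{(p,t)} = \mathfrak{g}_p\times\{0\}$. Since each $A^\ast$ is a Hamiltonian vector field on the underlying cosymplectic structure of $M$, we have $\eta(A^\ast)=0$, whence $J_{(p,t)}(A^\ast_p,0) = (\varphi_p A^\ast_p,0)$ and therefore $J\mathfrak{g}_{(p,t)} = \varphi_p(\mathfrak{g}_p)\times\{0\}$. Combining this with the $g$-orthogonal decomposition $T_pM = H_p\oplus\mathfrak{g}_p\oplus\varphi_p(\mathfrak{g}_p)$ of \autoref{-2.0003} and with $h = g + dt^2$, the Kähler-horizontal space $H'_{(p,t)}$ --- the $h$-orthogonal complement of $\mathfrak{g}_{(p,t)}\oplus J\mathfrak{g}_{(p,t)}$ in $T_{(p,t)}C(M)$ --- equals $H_p\times\mathbb{R}$; moreover $\xi_p\in H_p$ and $\varphi_p H_p = H_p$ (again \autoref{-2.0003}) make $H_p\times\mathbb{R}$ manifestly $J$-invariant, as it should be. Hence $d\widetilde\pi$ restricts to an isomorphism $H_p\times\mathbb{R}\to T_{\widetilde\pi(p,t)}C(M)^\zeta$ compatible, via the cone projections, with the isomorphism $d\pi\colon H_p\to T_{\pi(p)}M^\zeta$.

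It then remains to compute the induced structure. Write a horizontal lift of $(X,f\frac{\partial}{\partial t})\in T_{\widetilde\pi(p,t)}C(M)^\zeta$ as $(\widetilde X, f\frac{\partial}{\partial t})\in H_p\times\mathbb{R}$ with $d\pi(\widetilde X)=X$. From $J_{(p,t)}(\widetilde X, f\frac{\partial}{\partial t}) = (\varphi_p\widetilde X - f\xi_p,\ \eta(\widetilde X)\frac{\partial}{\partial t})$, which again lies in $H_p\times\mathbb{R}$, together with the defining formulas for $\varphi^\zeta$, $\xi^\zeta$, $\eta^\zeta$ in \autoref{100} and \autoref{-3}, one obtains
\[J^\zeta\Bigl(X, f\frac{\partial}{\partial t}\Bigr) = \Bigl(\varphi^\zeta X - f\xi^\zeta,\ \eta^\zeta(X)\frac{\partial}{\partial t}\Bigr),\]
which is exactly the complex structure the cone construction assigns to the coKähler manifold $\mu^{-1}(\zeta)/G$. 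Likewise $h|_{H'_{(p,t)}} = g|_{H_p} + dt^2$ and the $\mathbb{R}$-coordinate descends unchanged (the $G$-action being trivial on it), so the quotient metric is $h^\zeta = g^\zeta + dt^2$. Thus $(h^\zeta, J^\zeta)$ coincides with the Kähler structure on $C(\mu^{-1}(\zeta)/G)$, proving the equivalence.

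I expect no serious obstacle. The one step requiring care is the identification $H'_{(p,t)} = H_p\times\mathbb{R}$ of the Kähler-horizontal space of $C(M)$ with the coKähler-horizontal space of $M$; this rests on $\eta(A^\ast)=0$, i.e.\ on the induced vector fields lying in $\textrm{Ker}\,\eta$, which is precisely what keeps $J\mathfrak{g}$ inside $\textrm{Ker}\,\eta\times\{0\}$ and makes the splitting compatible with the one in \autoref{-2.0003}. In contrast to \autoref{2}, no normalization of the cone coordinate is needed here, since the reduced cone inherits its $\mathbb{R}$-factor directly from the cone of $M$ rather than from a Reeb direction created by the reduction.
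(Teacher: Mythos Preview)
Your proof is correct and follows essentially the same approach as the paper's: identify the horizontal spaces on $C(M)$ and $M$, and then push $J$ and $h$ down through the horizontal lift to recover the cone formulas for $(J^\zeta,h^\zeta)$. The paper is terser---it simply says ``define $H_p$ and $H'_{(p,t)}$ as in the proof of \autoref{2} and obtain the same commutative diagrams''---whereas you spell out explicitly that $H'_{(p,t)}=H_p\times\mathbb{R}$ via $\eta(A^\ast)=0$ and \autoref{-2.0003}, and you correctly note that no normalization of the $\mathbb{R}$-coordinate is required here; but this is elaboration, not a different argument.
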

\begin{proof}
    We define $H_p\subset T_p\mu^{-1}(\zeta)$ and $H'_{(p,t)}\subset T_{(p,t)}\widetilde{\mu}^{-1}(\zeta)$ as in the proof of \autoref{2}, and obtain the same commutative diagrams. Then we obtain
    \[\begin{split}
       J^\zeta\Bigl(X,f\displaystyle\frac{\partial}{\partial r}\Bigr)&=d\widetilde{\pi}J\widetilde{\Bigl(X,f\displaystyle\frac{\partial}{\partial r}\Bigr)}=d\widetilde{\pi}J\Bigl(\widetilde{X},f\displaystyle\frac{\partial}{\partial t}\Bigr) \\
       &=d\widetilde{\pi}\Bigl(\varphi \widetilde{X}-f\xi,\eta(\widetilde{X})\displaystyle\frac{\partial}{\partial t}\Bigr)\\
       &=\Bigl(d\pi(\varphi \widetilde{X}-f\xi),\eta(\widetilde{X})\displaystyle\frac{\partial}{\partial r}\Bigr)\\
       &=\Bigl(\varphi^\zeta X-f\xi^\zeta,\eta^\zeta(X)\displaystyle\frac{\partial}{\partial r}\Bigr).
   \end{split}\]

    We can also see that $h^\zeta=g^\zeta+dr^2$, therefore the K\"{a}hler structure $(h^\zeta,J^\zeta)$ coincides with one obtained by the cone construction $C({\mu}^{-1}(\zeta)/G)=({\mu}^{-1}(\zeta)/G)\times\mathbb{R}$.
\end{proof}

\vspace{0.1in}
Next we see the relationship between hyperK\"{a}hler reduction and 3-cosymplectic reduction. Let $(M,h,J_1,J_2,J_3)$ be a hyperK\"{a}hler manifold. Then $C^3(M):=M\times\mathbb{R}^3$ admits a natural 3-cosymplectic structure $(g,(\varphi_i,\xi_i,\eta_i)_{i=1,2,3})$ defined by 
\[g=h+\displaystyle\sum_{i=1}^3dt_i^2,\qquad\xi_i=\displaystyle\frac{\partial}{\partial t_i},\qquad\eta_i=dt_i,\]
\[\varphi_1\Bigl(X,f_1\displaystyle\frac{\partial}{\partial t_1},f_2\displaystyle\frac{\partial}{\partial t_2},f_3\displaystyle\frac{\partial}{\partial t_3}\Bigr)=\Bigl(J_1X,0,-f_3\displaystyle\frac{\partial}{\partial t_2},f_2\displaystyle\frac{\partial}{\partial t_3}\Bigr),\]
\[\varphi_2\Bigl(X,f_1\displaystyle\frac{\partial}{\partial t_1},f_2\displaystyle\frac{\partial}{\partial t_2},f_3\displaystyle\frac{\partial}{\partial t_3}\Bigr)=\Bigl(J_2X,f_3\displaystyle\frac{\partial}{\partial t_1},0,-f_1\displaystyle\frac{\partial}{\partial t_3}\Bigr),\]
\[\varphi_3\Bigl(X,f_1\displaystyle\frac{\partial}{\partial t_1},f_2\displaystyle\frac{\partial}{\partial t_2},f_3\displaystyle\frac{\partial}{\partial t_3}\Bigr)=\Bigl(J_3X,-f_2\displaystyle\frac{\partial}{\partial t_1},f_1\displaystyle\frac{\partial}{\partial t_2},0\Bigr),\]
where $(t_1,t_2,t_3)$ is the coordinate of $\mathbb{R}^3$ and $f_i\in C^\infty(C^3(M))$.

\begin{example}[\cite{montano2013topology}]
    Let $f$ be a hyperK\"{a}hler isometry on $(M,h,J_1,J_2,J_3)$. Define an action of $\mathbb{Z}^3$ on $C^3(M)$ by 
    \[(k_1,k_2,k_3)\cdot(p,t_1,t_2,t_3):=(f^{k_1+k_2+k_3}(p),t_1+k_1,t_2+k_2,t_3+k_3).\]
    This action is free and properly discontinuous, so $C^3(M)/\mathbb{Z}^3$ is a smooth manifold. Then $C^3(M)/\mathbb{Z}^3$ inherits a 3-cosymplectic structure. This is a 3-cosymplectic quotient for a trivial moment map on $C^3(M)$.$\End$
\end{example}

Assume that there is an action of $G$ on a hyperK\"{a}hler manifold $(M,h,J_1,J_2,J_3)$ which is Hamiltonian with respect to three symplectic structures and preserves $h,J_1,J_2,J_3$. Let $\mu:M\to\mathfrak{g}^\ast\otimes{\rm Im}\mathbb{H}$ be a hyperK\"{a}hler moment map. We define an action of $G$ on $C^3(M)$ by $g\cdot(p,t_1,t_2,t_3):=(gp,t_1,t_2,t_3)$. Then this action preserves the 3-cosymplectic structure on $C^3(M)$, and $\widetilde{\mu}=\mu\circ\textrm{pr}_M$ is a 3-cosymplectic moment map.
\begin{proposition}
    When a hyperK\"{a}hler quotient $\mu^{-1}(\zeta)/G$ is defined for $\zeta=\zeta_1i+\zeta_2j+\zeta_3k\in\mathfrak{g}^\ast\otimes{\rm Im}\mathbb{H}$, a 3-cosymplectic quotient $\widetilde{\mu}^{-1}(\zeta)/G$ is also defined. Moreover, $C^3(\mu^{-1}(\zeta)/G)$ and $\widetilde{\mu}^{-1}(\zeta)/G$ are equivalent as 3-cosymplectic manifolds.
\end{proposition}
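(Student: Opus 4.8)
The plan is to follow the proofs of \autoref{2} and \autoref{2.01} verbatim, now carrying along the three-dimensional fibre $\mathbb{R}^3$ and all three structure tensors at once. First I would record the elementary set-up facts. Since the $G$-action on $C^3(M)=M\times\mathbb{R}^3$ is trivial on the $\mathbb{R}^3$-factor, we have $\widetilde{\mu}^{-1}(\zeta)=\mu^{-1}(\zeta)\times\mathbb{R}^3$; the action on it is free and proper; and $\zeta$ is still a central regular value of $\widetilde{\mu}=\mu\circ\textrm{pr}_M$ (regularity because $d\,\textrm{pr}_M$ is surjective, centrality because the linear $G$-action on $\mathfrak{g}^\ast\otimes\textrm{Im}\mathbb{H}$ is the same one). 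Hence \autoref{200} applies and produces a 3-cosymplectic quotient $C^3(M)^\zeta:=\widetilde{\mu}^{-1}(\zeta)/G$, which is tautologically diffeomorphic to $(\mu^{-1}(\zeta)/G)\times\mathbb{R}^3=C^3(\mu^{-1}(\zeta)/G)$ through the commuting square of natural projections, exactly as in the diagram in the proof of \autoref{2}. It remains to identify the two 3-cosymplectic structures.

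Next I would decompose the relevant tangent spaces orthogonally,
\[T_p\mu^{-1}(\zeta)=H_p\oplus\mathfrak{g}_p,\qquad T_{(p,\mathbf{t})}\widetilde{\mu}^{-1}(\zeta)=H'_{(p,\mathbf{t})}\oplus\mathfrak{g}_{(p,\mathbf{t})},\]
as in \autoref{2}. Because each $\xi_i=\partial/\partial t_i$ is tangent to $\widetilde{\mu}^{-1}(\zeta)$ and is $g$-orthogonal to every orbit direction $A^\ast$ (which lives entirely in the $TM$-summand), a dimension count gives
\[\mathfrak{g}_{(p,\mathbf{t})}\xrightarrow{\ \sim\ }\mathfrak{g}_p,\qquad H'_{(p,\mathbf{t})}=H_p\oplus\textrm{span}\Bigl(\tfrac{\partial}{\partial t_1},\tfrac{\partial}{\partial t_2},\tfrac{\partial}{\partial t_3}\Bigr),\]
so that $d\,\textrm{pr}_M$ carries $H'_{(p,\mathbf{t})}$ onto $H_p$ with kernel the $\mathbb{R}^3$-summand, giving the same commutative square of isomorphisms $H'_{(p,\mathbf{t})}\cong T_{\widetilde{\pi}(p,\mathbf{t})}C^3(M)^\zeta$ and $H_p\cong T_{\pi(p)}M^\zeta$ as in \autoref{2}. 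Here $H_p$ is simultaneously $J_1,J_2,J_3$-invariant by the hyperK\"ahler reduction theorem, so each $\varphi_i$ (read off the explicit formula on $C^3(M)$) maps $H'_{(p,\mathbf{t})}$ into itself: it sends $(v,0,0,0)$ with $v\in H_p$ to $(J_iv,0,0,0)\in H_p$ and rotates the $\partial/\partial t_j$-components among the $\mathbb{R}^3$-summand.

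Then I would run the identification of structures. Letting $r_1,r_2,r_3$ be the coordinates on the $\mathbb{R}^3$-factor of $C^3(\mu^{-1}(\zeta)/G)$, the argument of \autoref{2} gives $(\textrm{pr})_\ast\widetilde{\tfrac{\partial}{\partial r_i}}=0$, hence $\widetilde{\tfrac{\partial}{\partial r_i}}\in\textrm{span}(\xi_1,\xi_2,\xi_3)$; after normalizing the $r_i$ one gets $\widetilde{\tfrac{\partial}{\partial r_i}}=\xi_i=\partial/\partial t_i$, so $\xi_i^\zeta=\partial/\partial r_i$ and $\eta_i^\zeta=dr_i$. For a tangent vector $\bigl(X,f_1\tfrac{\partial}{\partial r_1},f_2\tfrac{\partial}{\partial r_2},f_3\tfrac{\partial}{\partial r_3}\bigr)$ at $\widetilde{\pi}(p,\mathbf{t})$ one lifts to $H'_{(p,\mathbf{t})}$, applies the explicit $\varphi_i$ on $C^3(M)$, and pushes down by $d\widetilde{\pi}$ using the square; since $\varphi_i$ preserves $H'_{(p,\mathbf{t})}$ and acts on the $\mathbb{R}^3$-summand by the same fixed linear map that the cone construction uses, this reproduces \emph{verbatim} the cone formula for $\varphi_i^\zeta$ over $M^\zeta$ with $J_i$ replaced by $J_i^\zeta$. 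Finally $g^\zeta=h^\zeta+\sum_{i=1}^3 dr_i^2$ is checked exactly as in \autoref{2}, since $g|_{H'}=h|_{H_p}\oplus\sum dt_i^2$. Hence the 3-cosymplectic structure on $\widetilde{\mu}^{-1}(\zeta)/G$ is the one produced by the cone construction $C^3(\mu^{-1}(\zeta)/G)$.

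The only real subtlety I anticipate is bookkeeping: unlike in \autoref{2}, each $\varphi_i$ permutes and sign-changes the $\partial/\partial t_j$-components rather than fixing them, so these must be tracked carefully through the horizontal lift. But every such component lives in the flat $\mathbb{R}^3$-fibre on which $G$ acts trivially, so it descends unchanged and no genuinely new phenomenon arises; the one piece of ``geometric'' input — that $H_p$ is $J_1,J_2,J_3$-invariant simultaneously — is already supplied by the hyperK\"ahler reduction theorem. Thus nothing beyond \autoref{2}, \autoref{2.01}, \autoref{200} and standard properties of the hyperK\"ahler quotient is needed.
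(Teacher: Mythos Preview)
Your proof is correct but takes a genuinely different route from the paper's.  You argue \emph{directly}: you identify $H'_{(p,\mathbf{t})}=H_p\oplus\mathbb{R}^3$, use the $J_1,J_2,J_3$-invariance of $H_p$ from the hyperK\"ahler reduction theorem to see that each $\varphi_i$ preserves $H'_{(p,\mathbf{t})}$, and then push down the explicit cone formulas for $g,\varphi_i,\xi_i,\eta_i$ through $d\widetilde{\pi}$, reproducing the $C^3$-cone formulas over $M^\zeta$.  The paper instead argues \emph{inductively}: it introduces the intermediate K\"ahler submanifold $N=\mu_2^{-1}(\zeta_2)\cap\mu_3^{-1}(\zeta_3)$, observes that the $\varphi_1$-coK\"ahler structure on $C^3(N)$ is obtained by three successive single-cone constructions $C(C(C(N)))$, and then applies \autoref{2} and \autoref{2.01} three times to pull the reduction through the cones, yielding $C(C(C(N)))^{\zeta_1}\simeq C(C(C(N^{\zeta_1})))$; the other two indices are handled symmetrically.

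What each approach buys: the paper's argument is more economical, since it literally reduces the proposition to the already-established single-variable cases and never has to track the $3\times3$ permutation action of the $\varphi_i$ on the $\partial/\partial t_j$.  Your approach is more transparent and self-contained, at the cost of the bookkeeping you yourself flagged; it also makes the simultaneous $J_1,J_2,J_3$-invariance of $H_p$ the visible geometric input, whereas in the paper that fact is hidden inside the citation to \cite{hitchin1987hyperkahler}.  One small simplification available to you: since the diffeomorphism $\widetilde{\mu}^{-1}(\zeta)/G\to C^3(M^\zeta)$ is the tautological one $[p,\mathbf{t}]\mapsto([p],\mathbf{t})$, the lift of $\partial/\partial r_i$ is exactly $\partial/\partial t_i=\xi_i$ on the nose, so no normalization of the $r_i$ is actually needed.
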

\begin{proof}

    We define $N:=\mu_2^{-1}(\zeta_2)\cap\mu_3^{-1}(\zeta_3)$. Then clearly $\widetilde{\mu_2}^{-1}(\zeta_2)\cap\widetilde{\mu_3}^{-1}(\zeta_3)$ is diffeomorphic to $C^3(N)$. $N$ is endowed with a K\"{a}hler structure $(h|_N,J_1|_N)$ (see \cite{hitchin1987hyperkahler}), and the coK\"{a}hler structure $(g|_{C^3(N)},\varphi_1|_{C^3(N)},\xi_1|_{C^3(N)},\eta_1|_{C^3(N)})$ on $C^3(N)$ is obtained by applying cone constructions to $(N,h|_N,J_1|_N)$ three times. Moreover, by \autoref{2} and \autoref{2.01},
    \[C(C(C(N)))^{\zeta_1}\simeq C(C(C(N))^{\zeta_1})\simeq C(C(C(N)^{\zeta_1}))\simeq C(C(C(N^{\zeta_1})))\]
    holds as coK\"{a}hler manifolds. Therefore the reduced coK\"{a}hler structure $(g^\zeta,\varphi_1^\zeta,\xi_1^\zeta,\eta_1^\zeta)$ on $(C^3(M))^\zeta$ coincides with one of three coK\"{a}hler structures on $C^3(M^\zeta)$ obtained by $(h^\zeta,J_1^\zeta)$. 
    
    Repeating the same argument, we can see that the reduced 3-cosymplectic structure $(g^\zeta,(\varphi_i^\zeta,\xi_i^\zeta,\eta_i^\zeta)_{i=1,2,3})$ coincides with one obtained by the cone construction
    \[C^3({\mu}^{-1}(\zeta)/G)=({\mu}^{-1}(\zeta)/G)\times\mathbb{R}^3.\]
\end{proof}

\vspace{0.1in}
Conversely, for a given 3-cosymplectic manifold $(M,g,(\varphi_i,\xi_i,\eta_i)_{i=1,2,3})$, we can define a hyperK\"{a}hler structure $(h,J_1,J_2,J_3)$ on the cone $C(M)$ by
\[h=g+dt^2,\qquad J_i\Bigl(X,f\displaystyle\frac{\partial}{\partial t}\Bigr)=\Bigl(\varphi_i X-f\xi_i,\eta_i(X)\displaystyle\frac{\partial}{\partial t}\Bigr).\]
Assume that there is an action of $G$ on a 3-cosymplectic manifold $(M,g,(\varphi_i,\xi_i,\eta_i)_{i=1,2,3})$ which is Hamiltonian with respect to three cosymplectic structures and preserves three coK\"{a}hler structures. Let $\mu:M\to\mathfrak{g}^\ast\otimes{\rm Im}\mathbb{H}$ be a 3-cosymplectic moment map. We define an action of $G$ on $C(M)$ in the same way as in \autoref{2}. Then the action preserves the hyperK\"{a}hler structure on $C(M)$ and $\widetilde{\mu}=\mu\circ\textrm{pr}_M$ is a hyperK\"{a}hler moment map. We obtain the following.
\begin{proposition}
    When a 3-cosymplectic quotient $\mu^{-1}(\zeta)/G$ is defined for $\zeta=\zeta_1i+\zeta_2j+\zeta_3k\in\mathfrak{g}^\ast\otimes{\rm Im}\mathbb{H}$, a hyperK\"{a}hler quotient $\widetilde{\mu}^{-1}(\zeta)/G$ is also defined. Moreover, $C(\mu^{-1}(\zeta)/G)$ and $\widetilde{\mu}^{-1}(\zeta)/G$ are equivalent as hyperK\"{a}hler manifolds.
\end{proposition}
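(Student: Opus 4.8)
The plan is to transcribe the proof of \autoref{2.01} three times, once for each complex structure $J_i$, using the fact (established inside the proof of \autoref{200}) that the three reduced coK\"{a}hler structures on $M^\zeta$ share a single horizontal lift. First I would note that $\operatorname{pr}_M$ is a submersion and the lifted $G$-action moves only the $M$-factor, so $\zeta$ remains a central regular value of $\widetilde{\mu}$ and the lifted action is still free and proper; hence the hyperK\"{a}hler quotient $\widetilde{\mu}^{-1}(\zeta)/G$ is defined. Since $\widetilde{\mu}^{-1}(\zeta)=\mu^{-1}(\zeta)\times\mathbb{R}$, passing to the quotient gives a diffeomorphism $\widetilde{\mu}^{-1}(\zeta)/G\cong C(\mu^{-1}(\zeta)/G)$, and the square relating the reduction projections $\widetilde{\pi},\pi$ to the two natural projections $\operatorname{pr}$ commutes exactly as in \autoref{2}.

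Next I would fix the orthogonal decompositions $T_p\mu^{-1}(\zeta)=H_p\oplus\mathfrak{g}_p$ and $T_{(p,t)}\widetilde{\mu}^{-1}(\zeta)=H'_{(p,t)}\oplus\mathfrak{g}_{(p,t)}$, where $H_p$ is the $g$-orthogonal complement of $\mathfrak{g}_p$, i.e.\ the horizontal space of the quotient metric appearing in \autoref{200}. Two properties of $H_p$, both contained in the proof of \autoref{200}, are needed: $H_p$ is invariant under every $\varphi_i$ (each of the three realizations of $M^\zeta$ as a coK\"{a}hler reduction via \autoref{-2.0003} produces this same $H_p$, and in the $i$-th realization $H_p$ is $\varphi_i$-invariant), and $H_p$ contains every $\xi_i$ (one has $g(\xi_i,A^\ast_p)=\eta_i(A^\ast_p)=0$ since $A^\ast$ is the Hamiltonian vector field of $\mu_i^A$, and $\xi_i\in T_p\mu^{-1}(\zeta)$ by \autoref{0} together with its permuted versions and the structure relations of \autoref{-2.0001}). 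Consequently $(d\pi)_p|_{H_p}$ intertwines $(\varphi_i,\xi_i,\eta_i)$ with $(\varphi_i^\zeta,\xi_i^\zeta,\eta_i^\zeta)$ for each $i$. I would also observe that $\partial/\partial t$ on $C(M)$ is $G$-invariant and horizontal, hence descends to the coordinate field $\partial/\partial r$ on $C(M^\zeta)$; thus $\widetilde{\frac{\partial}{\partial r}}=\frac{\partial}{\partial t}$ and the horizontal lift of $(X,f\frac{\partial}{\partial r})$ is $(\widetilde{X},f\frac{\partial}{\partial t})$ with $\widetilde{X}\in H_p$.

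With these ingredients the computation is immediate: for each $i$,
\[\begin{split}
    J_i^\zeta\Bigl(X,f\displaystyle\frac{\partial}{\partial r}\Bigr)&=d\widetilde{\pi}\,J_i\Bigl(\widetilde{X},f\displaystyle\frac{\partial}{\partial t}\Bigr)=d\widetilde{\pi}\Bigl(\varphi_i\widetilde{X}-f\xi_i,\ \eta_i(\widetilde{X})\displaystyle\frac{\partial}{\partial t}\Bigr)\\
    &=\Bigl(d\pi(\varphi_i\widetilde{X})-f\,d\pi(\xi_i),\ \eta_i(\widetilde{X})\displaystyle\frac{\partial}{\partial r}\Bigr)=\Bigl(\varphi_i^\zeta X-f\xi_i^\zeta,\ \eta_i^\zeta(X)\displaystyle\frac{\partial}{\partial r}\Bigr),
\end{split}\]
which is exactly the complex structure produced by coning the reduced 3-cosymplectic structure $(g^\zeta,(\varphi_i^\zeta,\xi_i^\zeta,\eta_i^\zeta)_{i=1,2,3})$. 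Since the $\mathbb{R}$-factor is $G$-invariant and $g$-orthogonal to $M$, the quotient metric splits as $h^\zeta=g^\zeta+dr^2$, so the two hyperK\"{a}hler structures on $\widetilde{\mu}^{-1}(\zeta)/G\cong C(\mu^{-1}(\zeta)/G)$ coincide.

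The main obstacle is the bookkeeping of the second paragraph: one must be certain that a \emph{single} subspace $H_p$ simultaneously serves all three tensors $\varphi_i$ and contains all three Reeb fields $\xi_i$, equivalently that the horizontal distribution of the reduction does not depend on which of the three coK\"{a}hler reductions is used to build $M^\zeta$. This is precisely what the last paragraph of the proof of \autoref{200} records (the three quotient metrics agree), and once it is invoked the remainder is a routine repetition of \autoref{2.01}.
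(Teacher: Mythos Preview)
Your argument is correct. The paper, however, takes a more modular route: rather than recomputing the horizontal lift and the action of each $J_i$ directly on $\mu^{-1}(\zeta)\times\mathbb{R}$, it fixes an index, sets $N:=\mu_j^{-1}(\zeta_j)\cap\mu_k^{-1}(\zeta_k)$, identifies the $J_i$-K\"{a}hler structure on $\widetilde{\mu}_j^{-1}(\zeta_j)\cap\widetilde{\mu}_k^{-1}(\zeta_k)=C(N)$ with the cone of the coK\"{a}hler structure on $N$, and then simply invokes \autoref{2.01} to conclude $C(N)^{\zeta_i}\simeq C(N^{\zeta_i})=C(M^\zeta)$ as K\"{a}hler manifolds; cycling the index finishes the proof. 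What you do instead is unpack that black box: you carry the single horizontal subspace $H_p\subset T_p\mu^{-1}(\zeta)$ (which you correctly justify is simultaneously $\varphi_i$-invariant and contains every $\xi_i$ via the proof of \autoref{200}) and redo the computation of \autoref{2.01} for each $i$ in one stroke. Your approach is more explicit and makes transparent why the three reductions share the same horizontal distribution, at the cost of repeating work already packaged in \autoref{2.01}; the paper's approach is shorter and cleanly reduces the statement to the single-structure case already proved.
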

\begin{proof}

    There is a coK\"{a}hler structure $(g|_N,\varphi_1|_N,\xi_1|_N,\eta_1|_N)$ on $N:=\mu_2^{-1}(\zeta_2)\cap\mu_3^{-1}(\zeta_3)$, and by \autoref{2.01}
    \[(C(M)^\zeta,h^\zeta,J_1^\zeta)\simeq C(N)^{\zeta_1}\simeq C(N^{\zeta_1})\simeq\Bigl(C(M^\zeta),g^\zeta+dr^2,\varphi_1^\zeta+\eta_1^\zeta\otimes\displaystyle\frac{\partial}{\partial r}-dr\otimes\xi^\zeta_1\Bigr)\]
    holds as K\"{a}hler manifolds. Repeating the same argument, we can see that the hyperK\"{a}hler structure $(h^\zeta,J_1^\zeta,J_2^\zeta,J_3^\zeta)$ coincides with one obtained by the cone construction $C({\mu}^{-1}(\zeta)/G)=({\mu}^{-1}(\zeta)/G)\times\mathbb{R}$.
\end{proof}

\section{CoK\"{a}hler reduction of mapping tori}\label{mapping}

In general, every coK\"{a}hler manifold is locally the Riemannian product of a K\"{a}hler manifold with the real line (see \cite{cappelletti2013survey}, for example). Therefore it is important to find coK\"{a}hler manifolds which are not the global product of a K\"{a}hler manifold with $\mathbb{R}$ or $S^1$. Such coK\"{a}hler manifolds which are compact are obtained by the mapping torus procedure. For a manifold $S$ and a diffeomorphism $f:S\to S$, we define the \textit{mapping torus} $S_f$ as follows:
\[S_f=(S\times[0,1])/\{(p,0)\sim(f(p),1)\mid p\in S\}.\]
Note that there is a fibration $S\hookrightarrow S_f\overset{\textrm{pr}}{\twoheadrightarrow}S^1$. In the case that $S$ is endowed with a K\"{a}hler structure $(G,J)$ and $f:S\to S$ is a Hermitian isometry, $S_f$ admits a coK\"{a}hler structure. In fact, $S_f$ is diffeomorphic to $C(S)/\mathbb{Z}$ in \autoref{1.1}. 

Let $\Omega$ be the symplectic form of the K\"{a}hler manifold $(S,h,J)$. We extend $J$ and pullback $h,\Omega$ to $S\times[0,1]$, and they descend to $S_f$ since $f^\ast h=h$ and $f_\ast J=Jf_\ast$. We denote them $\widetilde{J},\widetilde{h},\widetilde{\Omega}$. Then we can write the coK\"{a}hler structure $(g,\varphi,\eta,\xi)$ on $S_f$ as follows:
\[\varphi=\widetilde{J},\qquad\eta=\textrm{pr}^\ast d\theta,\qquad\xi_{[(p,t)]}=\displaystyle\left. \frac{d}{ds}\right|_{s=0}[(p,t+s)],\]
\[g(X,Y)=\widetilde{h}(X,Y)+\eta(X)\eta(Y),\]
where $\theta$ is the coordinate of $S^1$ and $[(p,t)]$ denotes the equivalence class of $(p,t)$ with respect to the quotient $C(S)/\mathbb{Z}$. The corresponding 2-form is given by $\omega:=\widetilde{\Omega}$. 

It is known that any closed coK\"{a}hler manifold is in fact a K\"{a}hler mapping torus:
\begin{theorem}[Li\ \cite{li2008topology}]\label{2.1}
    A closed manifold $M$ admits a coK\"{a}hler structure if and only if there exists a K\"{a}hler manifold $(S,h,J)$ and a Hermitian isometry $f$ of $(S,h,J)$ such that $M$ is diffeomorphic to $S_f$.\hfill{$\Box$}
\end{theorem}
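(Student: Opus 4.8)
\emph{Proof idea.} The ``if'' direction is precisely the construction recalled just before the statement: given a Kähler manifold $(S,h,J)$ and a Hermitian isometry $f$, the extended objects $\widetilde h,\widetilde\Omega,\widetilde J$ descend to $S_f=C(S)/\mathbb Z$ (cf.\ \autoref{1.1}) and, together with $\eta=\mathrm{pr}^\ast d\theta$ and the generator $\xi$ of the interval factor, constitute a coKähler structure on $S_f$. So I concentrate on ``only if''.

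Let $(M,g,\varphi,\xi,\eta)$ be a closed coKähler manifold. First collect the standard structural facts: $d\eta=0$ and $\eta$ is nowhere zero (since $\eta(\xi)\equiv1$), and $\xi$ is $g$-parallel — a standard consequence of $\nabla\varphi=0$ (the characterization of coKähler used in the proof of \autoref{100}). In particular $\xi$ is a Killing field whose flow $\{\phi_t\}$ preserves $g,\varphi,\xi,\eta$. As $M$ is compact, $\mathrm{Isom}(M,g)$ is a compact Lie group, so the closure $\mathbb T:=\overline{\{\phi_t:t\in\mathbb R\}}$ is a torus acting on $M$ and preserving the whole coKähler structure; being connected, $\mathbb T$ acts trivially on $H^1(M;\mathbb R)$. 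Averaging over $\mathbb T$ a Tischler approximation of $\eta$ yields a closed, $\mathbb T$-invariant $1$-form $\eta'$ with integral periods and with $\eta'(\xi)>0$ everywhere; hence $\eta'=\beta^\ast d\theta$ for a fibration $\beta:M\to S^1$, and $d(\eta'(\xi))=\mathcal L_\xi\eta'=0$ forces $\eta'(\xi)\equiv c_0$ for a constant $c_0>0$. Let $S:=\beta^{-1}(\mathrm{pt})$, a closed submanifold, and let $f:S\to S$ be the first-return map of the complete, structure-preserving, $g$-parallel field $\xi/c_0$; since $\eta'(\xi)$ is constant this return map is simply $f=\phi_{1/c_0}|_S$, and because $\xi$ is transverse to the fibres of $\beta$ the total space is the mapping torus of the return map, i.e.\ $M\cong S_f$.

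It remains to endow $S$ with a Kähler structure for which $f$ is a Hermitian isometry. Along $S$ one has $TM|_S=TS\oplus\mathbb R\xi$; let $\pi_S\colon TM|_S\to TS$ be the projection killing $\xi$ and put $J_S:=\pi_S\circ\varphi|_{TS}$. Using $\varphi\xi=0$, $\varphi^2=-\mathrm{id}+\eta\otimes\xi$ and $\pi_S\xi=0$ one checks $J_S^2=-\mathrm{id}$, and a $J_S$-compatible metric $g_S$ on $S$ is obtained by symmetrizing $g|_S$ over $J_S$. Since $f=\phi_{1/c_0}$ preserves $g,\varphi,\xi,\eta$ and $\eta'$, it commutes with $\pi_S$, hence preserves $J_S$ and $g_S$, so $f$ is automatically a $(g_S,J_S)$-Hermitian isometry. \textbf{The main obstacle} is then to prove that $(S,g_S,J_S)$ is genuinely \emph{Kähler}, not merely almost Hermitian — i.e.\ that $J_S$ is integrable and its fundamental $2$-form is closed. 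The plan here is to use the normality of $\varphi$, which for a coKähler manifold means $N_\varphi=0$, together with the fact quoted in \autoref{mapping} that a coKähler manifold is locally a Riemannian product of a Kähler manifold with $\mathbb R$: restricting these local product decompositions to the transverse slices $\ker\eta'$ should locally identify $(S,g_S,J_S)$ with the Kähler factors, whence closedness of $\widetilde\Omega$ on $M$ descends to closedness of the fundamental form on $S$, and integrability of $J_S$ follows from that of the local complex factors. Checking that the projected Levi-Civita connection is $J_S$-parallel can be done exactly as in the proof of \autoref{100}. (When $\dim S=2$ this last paragraph is vacuous, consistently with examples such as flat $T^3\cong(T^2)_{\mathrm{id}}$.)
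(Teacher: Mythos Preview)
The paper does not prove this theorem: it is quoted from Li \cite{li2008topology} and closed with a $\Box$, so there is no ``paper's own proof'' to compare against. Your sketch is, in outline, the standard argument (Tischler approximation of the closed nowhere-vanishing $1$-form $\eta$, averaged over the torus generated by the parallel Reeb field, to produce a fibration $M\to S^1$ whose monodromy is the time-$1/c_0$ map of the Reeb flow), and the ``if'' direction is indeed exactly the mapping-torus construction recalled just before the statement.

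Two points where your sketch would need tightening. First, your metric $g_S$ obtained by ``symmetrizing $g|_S$ over $J_S$'' is not the right object: this produces an almost Hermitian metric, but there is no reason its fundamental $2$-form should be closed. The natural choice is to use $\omega|_S$, which \emph{is} closed and non-degenerate on $TS$ (since $\iota_\xi\omega=0$ and $\eta\wedge\omega^n\neq0$), and to set $h_S(X,Y):=\omega(X,J_SY)$; with your $J_S=\pi_S\circ\varphi$ one checks $h_S(X,Y)=g(qX,qY)$ where $q(X)=X-\eta(X)\xi$ is the projection onto $\ker\eta$, so $h_S$ is positive definite and $f$-invariant. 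Second, your appeal to the local Riemannian product is slightly off as stated: the local K\"ahler leaves are tangent to $\ker\eta$, whereas $S$ is tangent to $\ker\eta'$, so $S$ is a \emph{graph} over the local K\"ahler factor rather than a union of pieces of leaves. The fix is that the projection $q$ along $\xi$ identifies $TS$ with $\ker\eta$ and intertwines $J_S$ with $\varphi|_{\ker\eta}$; since the latter is integrable (normality gives $N_\varphi=0$, and $\nabla\xi=0$ makes the local product holomorphic), so is $J_S$. With these two corrections the argument goes through.
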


Assume that there is a free and proper Hamiltonian action of a Lie group $G$ on a K\"{a}hler manifold $(S,h,J)$ preserving the K\"{a}hler structure. Moreover, we suppose that a Hermitian isometry $f:S\to S$ of $(S,h,J)$ is equivariant with respect to the action of $G$. Then we can define an action of $G$ on the mapping torus $S_f$ by $g\cdot[(p,t)]=[(gp,t)]$.

\begin{proposition}\label{3}
    Let $\mu:S\to\mathfrak{g}^\ast$ be a moment map of the Hamiltonian action of $G$ on $S$. Then the action of $G$ on $S_f$ is Hamiltonian if and only if $\mu(f(p))=\mu(p)$ holds for some $p\in S$.
\end{proposition}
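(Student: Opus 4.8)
The plan is to carry everything up to the cone $C(S)=S\times\mathbb{R}$, on which $S_f=C(S)/\mathbb{Z}$ is the quotient by the free, properly discontinuous $\mathbb{Z}$-action of \autoref{1.1}, and then to reduce to \autoref{2}. Write $\pi:C(S)\to S_f$ for the covering projection and $\textrm{pr}_S:C(S)\to S$ for the first factor. By construction the coK\"{a}hler structure of $S_f$ is the descent of the cone coK\"{a}hler structure of $C(S)$, so $\pi$ pulls it back; the $G$-action $g\cdot(p,t)=(gp,t)$ on $C(S)$ commutes with the $\mathbb{Z}$-action because $f$ is $G$-equivariant, and descends to the given action on $S_f$; and by \autoref{2} the action on $C(S)$ is Hamiltonian with moment map $\widetilde{\mu}=\mu\circ\textrm{pr}_S$. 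I will assume $S$ is connected, so that $C(S)$ and $S_f$ are connected.

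The decisive preliminary point is that $\mu\circ f-\mu$ is a \emph{constant} element $c\in\mathfrak{g}^\ast$. Indeed, $f$ being a Hermitian isometry gives $f^\ast\Omega=\Omega$, and $f$ being $G$-equivariant gives $df_p(A^\ast_p)=A^\ast_{f(p)}$ for every $A\in\mathfrak{g}$, so that $f^\ast$ commutes with $\iota_{A^\ast}$; hence
\[
d(\mu^A\circ f)=f^\ast(d\mu^A)=f^\ast(\iota_{A^\ast}\Omega)=\iota_{A^\ast}(f^\ast\Omega)=\iota_{A^\ast}\Omega=d\mu^A,
\]
so $\mu^A\circ f-\mu^A$ is locally constant, hence constant on the connected manifold $S$. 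Consequently ``$\mu(f(p))=\mu(p)$ for some $p$'' is equivalent to ``$\mu\circ f=\mu$'', i.e.\ to $c=0$, and it suffices to prove that the $G$-action on $S_f$ is Hamiltonian if and only if $\mu\circ f=\mu$.

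For the forward direction, let $\bar\mu:S_f\to\mathfrak{g}^\ast$ be a moment map for the $G$-action on $S_f$. For $A\in\mathfrak{g}$ the defining conditions give $\xi(\bar\mu^A)=0$, whence $\omega(A^\ast,-)=d\bar\mu^A$ on $S_f$; pulling back along the local diffeomorphism $\pi$ and comparing with the identity $\iota_{A^\ast}(\pi^\ast\omega)=d(\mu^A\circ\textrm{pr}_S)$ on $C(S)$ (which holds since $\mu\circ\textrm{pr}_S$ is a moment map there, by \autoref{2}) yields $d\bigl(\bar\mu^A\circ\pi-\mu^A\circ\textrm{pr}_S\bigr)=0$, so $\bar\mu^A\circ\pi=\mu^A\circ\textrm{pr}_S+c_A$ for a constant $c_A\in\mathbb{R}$ by connectedness. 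Since $\bar\mu^A\circ\pi$ is $\mathbb{Z}$-invariant, evaluating at $(p,0)$ and its translate $(f(p),1)$ gives $\mu^A(f(p))=\mu^A(p)$ for all $p$ and all $A$, i.e.\ $\mu\circ f=\mu$. Conversely, if $\mu\circ f=\mu$ then $\mu\circ\textrm{pr}_S:C(S)\to\mathfrak{g}^\ast$ is $\mathbb{Z}$-invariant and descends to a smooth $\bar\mu:S_f\to\mathfrak{g}^\ast$; that $\bar\mu$ is a moment map for the $G$-action on $S_f$ (equivariance, $\eta(A^\ast)=0$, $\omega(A^\ast,-)=d\bar\mu^A$, and $d\bar\mu^A(\xi)=0$) follows at once by pulling back along $\pi$, since these are pointwise and differential identities satisfied by $\mu\circ\textrm{pr}_S$ on $C(S)$ according to \autoref{2} and $\pi$ is a local diffeomorphism.

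The step I expect to take the most care is the constancy of $\mu\circ f-\mu$: this is exactly what upgrades the one-point hypothesis to the global equality $\mu\circ f=\mu$ that the descent of the moment map requires, and it relies essentially on $f$ being at once a symplectomorphism and $G$-equivariant (and on the connectedness of $S$). The rest is routine transfer of identities through the covering $\pi$ together with \autoref{2}.
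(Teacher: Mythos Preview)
Your proof is correct and follows essentially the same approach as the paper: both hinge on the computation that $d(\mu^A\circ f)=d\mu^A$ (hence $\mu\circ f-\mu$ is constant), and then show that a moment map on $S_f$ exists precisely when this constant vanishes. The only cosmetic difference is that you phrase the argument globally via the covering $\pi:C(S)\to S_f$ and invoke \autoref{2}, whereas the paper carries out the same comparison in local product coordinates on $S_f$; since $\pi$ is a local diffeomorphism these are the same computation.
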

\begin{proof}
    Let $\widetilde{\mu}:S_f\to\mathfrak{g}^\ast$ be a cosymplectic moment map. A vector field on $S_f$ is locally has the form $X+a\frac{\partial}{\partial t}$, where $t$ is the coordinate of $\mathbb{R}$ and $a\in C^\infty(S_f)$. Then we have 
    \[d\widetilde{\mu}^{A}\Bigl(X+a\frac{\partial}{\partial t}\Bigr)=\omega\Bigl(A^\ast,X+a\frac{\partial}{\partial t}\Bigr)=\Omega(A^\ast,X)=d\mu^A(X)\]
    for any $A\in\mathfrak{g}$. Hence the map $\widetilde{\mu}$ locally has the form
    \[\widetilde{\mu}[(p,t)]=\mu(p)+\zeta\]
    for some $\zeta\in\mathfrak{g}^\ast$ (since both $\mu$ and $\widetilde{\mu}$ are equivariant, $\zeta$ must be central). Since $\widetilde{\mu}$ is globally defined, $\mu(f(p))=\mu(p)$ holds for any $p\in S$. Conversely, if $\mu(f(p))=\mu(p)$ holds for any $p\in S$, we obtain a cosymplectic moment map $\widetilde{\mu}$ by $\widetilde{\mu}[(p,t)]=\mu(p)+\zeta$ for any central $\zeta$. However, we have
    \[\begin{split}
        d(\mu^A\circ f)(X)&=d\mu^A(f_\ast X)=\Omega(A^\ast,f_\ast X)\\
       &=\Omega(f_\ast(f_\ast^{-1}A^\ast),f_\ast X)\\
       &=\Omega(f_\ast^{-1}A^\ast,X)=\Omega(A^\ast,X)\\
       &=d\mu^A(X)
   \end{split}\]
   for any $A\in\mathfrak{g}$ and $X\in\mathfrak{X}(S)$, hence it is sufficient that $\mu(f(p))=\mu(p)$ holds for some $p\in S$. Two moment maps on $S$ differs only by a constant, thus the condition $\mu(f(p))=\mu(p)$ is independent of the choice of a moment map $\mu$.
\end{proof}

\begin{remark}
    In \cite{itoh1977fixed} it was proved that if a compact K\"{a}hler manifold $(S,h,J)$ has positive holomorphic sectional curvature, then any Hermitian isometry $f$ of $(S,h,J)$ has a fixed point. In this case any Hamiltonian action on $S$ such that $f$ is equivariant induces a Hamiltonian action on $S_f$.$\End$
\end{remark}

Suppose that an equivariant Hermitian isometry $f$ satisfies the condition in \autoref{3}. We define a cosymplectic moment map $\widetilde{\mu}:S_f\to\mathfrak{g}^\ast$ by $\widetilde{\mu}[(p,t)]=\mu(p)$. Note that the  action of $G$ on $S_f$ is free and proper, and preserves the coK\"{a}hler structure.

Let $\zeta\in\mathfrak{g}^\ast$ be a regular value of the moment map $\widetilde{\mu}:S_f\to\mathfrak{g}^\ast$. Let $(S_f)^\zeta$ be the coK\"{a}hler quotient. In the case that $S$ is compact, $(S_f)^\zeta$ is a closed coK\"{a}hler manifold, and thus it is a mapping torus of some K\"{a}hler manifold from \autoref{2.1}. In the following we observe that it can be obtained by the K\"{a}hler quotient $S^\zeta=\mu^{-1}(\zeta)/G$ for the same value $\zeta$. From the condition $\mu(f(p))=\mu(p)$, $f$ preserves $\mu^{-1}(\zeta)$ and it descends to a map $f^\zeta:S^\zeta\to S^\zeta$ since $f$ is equivariant.
\begin{lemma}
$f^\zeta$ is a Hermitian isometry of $(S^\zeta,h^\zeta,J^\zeta)$.
\end{lemma}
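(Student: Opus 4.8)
The plan is to verify the two defining properties of a Hermitian isometry for $f^\zeta$ — namely $(f^\zeta)^\ast h^\zeta = h^\zeta$ and $f^\zeta_\ast J^\zeta = J^\zeta f^\zeta_\ast$ — by lifting everything to $\mu^{-1}(\zeta)$ and using that $f$ itself is a Hermitian isometry of $(S,h,J)$. First I would recall the setup of the Kähler reduction (\autoref{100} specialized to the Kähler case, or equivalently \autoref{2.01}): for $p \in \mu^{-1}(\zeta)$ we have the orthogonal decomposition $T_p\mu^{-1}(\zeta) = H_p \oplus \mathfrak{g}_p$ with $H_p$ invariant under $J$, and $d\pi_p|_{H_p} : H_p \to T_{\pi(p)}S^\zeta$ is an isometric isomorphism intertwining $J$ with $J^\zeta$. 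The key compatibility is that $f$ restricts to a map $\mu^{-1}(\zeta) \to \mu^{-1}(\zeta)$ (from $\mu \circ f = \mu$, which holds on all of $S$ by \autoref{3}) and, being equivariant, descends to $f^\zeta$ with $f^\zeta \circ \pi = \pi \circ f|_{\mu^{-1}(\zeta)}$.

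The main step is to show that $f_\ast$ maps the horizontal subspace $H_p$ to $H_{f(p)}$. Since $f$ is equivariant, $f_\ast(A^\ast_p) = A^\ast_{f(p)}$ for $A \in \mathfrak{g}$, so $f_\ast(\mathfrak{g}_p) = \mathfrak{g}_{f(p)}$; and since $f$ preserves $\mu^{-1}(\zeta)$ we have $f_\ast(T_p\mu^{-1}(\zeta)) = T_{f(p)}\mu^{-1}(\zeta)$. Because $f$ is an isometry, it preserves the orthogonal complement of $\mathfrak{g}_p$ inside $T_p\mu^{-1}(\zeta)$, which is exactly $H_p$; hence $f_\ast(H_p) = H_{f(p)}$. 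This is precisely the analogue of \autoref{-2.00011} for the map $f$ in place of the group action, and I expect it to be the crux — everything else is formal diagram-chasing once this is in place.

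With $f_\ast(H_p) = H_{f(p)}$ established, I would finish as follows. For $X \in T_{\pi(p)}S^\zeta$ with horizontal lift $\widetilde X \in H_p$, the vector $f_\ast \widetilde X \in H_{f(p)}$ is the horizontal lift of $f^\zeta_\ast X$, because $d\pi(f_\ast \widetilde X) = f^\zeta_\ast (d\pi \widetilde X) = f^\zeta_\ast X$. Then isometry of $f$ gives
\[
h^\zeta(f^\zeta_\ast X, f^\zeta_\ast Y) = h(f_\ast \widetilde X, f_\ast \widetilde Y) = h(\widetilde X, \widetilde Y) = h^\zeta(X, Y),
\]
so $f^\zeta$ is an isometry of $(S^\zeta, h^\zeta)$. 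For the complex structure, using $f_\ast J = J f_\ast$ and the fact that $J$ preserves $H_{f(p)}$ so that $J^\zeta f^\zeta_\ast X = d\pi(J f_\ast \widetilde X)$, we get
\[
J^\zeta f^\zeta_\ast X = d\pi(J f_\ast \widetilde X) = d\pi(f_\ast J \widetilde X) = f^\zeta_\ast\, d\pi(J \widetilde X) = f^\zeta_\ast J^\zeta X,
\]
so $f^\zeta_\ast J^\zeta = J^\zeta f^\zeta_\ast$. Hence $f^\zeta$ is a Hermitian isometry of $(S^\zeta, h^\zeta, J^\zeta)$.
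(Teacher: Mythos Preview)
Your proof is correct and follows essentially the same route as the paper: both arguments reduce to showing $f_\ast(H_p)=H_{f(p)}$ (you via equivariance plus isometry preserving orthogonal complements, the paper via the equivalent direct verification that $f_\ast v$ is tangent to $\mu^{-1}(\zeta)$ and orthogonal to $\mathfrak{g}_{f(p)}$), and then read off $(f^\zeta)^\ast h^\zeta=h^\zeta$ and $(f^\zeta)_\ast J^\zeta=J^\zeta(f^\zeta)_\ast$ from the identification $(f^\zeta)_\ast=(d\pi|_{H_{f(p)}})\,f_\ast\,(d\pi|_{H_p})^{-1}$. The only item the paper mentions that you omit is the remark that $f^\zeta$ is a diffeomorphism because $f^{-1}$ also descends, which is immediate.
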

\begin{proof}
    $f^\zeta$ is a diffeomorphism since $f^{-1}$ also descends to $S^\zeta$ and is the inverse of $f^\zeta$. We orthogonally decompose $T_p\mu^{-1}(\zeta)$ as 
    \[T_p\mu^{-1}(\zeta)=H_p\oplus\mathfrak{g}_p.\]
    Then for any $v\in H_p$, we have
    \[d\mu^Af_\ast(v)=d(\mu^A f)(v)=d\mu^A(v)=0\]
    from the proof of \autoref{3}, and also obtain
    \[h(f_\ast(v),A^\ast)=h(v,(f^{-1})_\ast(A^\ast))=h(v,A^\ast)=0,\]
    and thus $f_\ast(v)\in H_{f(p)}$.

    Let $\pi:\mu^{-1}(\zeta)\to S^\zeta$ be the projection of reduction. Since $f^\zeta\pi=\pi f$ and $f_\ast(H_p)=H_{f(p)}$, we obtain 
    \[(f^\zeta)_\ast=(d\pi|_{H_{f(p)}})f_\ast(d\pi|_{H_p})^{-1},\]
    so $(f^\zeta)^\ast h^\zeta=h^\zeta$ holds from $f^\ast h=h$ and the definition of $h^\zeta$. Similarly we can check that $(f^\zeta)_\ast J^\zeta=J^\zeta(f^\zeta)_\ast$.
\end{proof}
\begin{theorem}
    $(S_f)^\zeta$ is equivariant to $(S^\zeta)_{f^\zeta}$ as coK\"{a}hler manifolds.
\end{theorem}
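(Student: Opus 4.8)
The plan is to produce an explicit diffeomorphism $\Phi\colon (S_f)^\zeta\to (S^\zeta)_{f^\zeta}$ from the observation that passing to the $G$-quotient and forming the mapping torus commute, and then to check that $\Phi$ identifies the two coK\"{a}hler structures by inspecting $\eta$, $\xi$, $g$ and $\varphi$ one at a time.

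First I would realize the mapping torus as $S_f\simeq(S\times\mathbb{R})/\mathbb{Z}$ with the $\mathbb{Z}$-action $k\cdot(p,t)=(f^k(p),t+k)$, exactly as in \autoref{1.1}. Since $\mu(f(p))=\mu(p)$, $f$ preserves $\mu^{-1}(\zeta)$, hence $\widetilde{\mu}^{-1}(\zeta)\simeq(\mu^{-1}(\zeta)\times\mathbb{R})/\mathbb{Z}$; since $f$ is $G$-equivariant, the $G$- and $\mathbb{Z}$-actions on $\mu^{-1}(\zeta)\times\mathbb{R}$ commute, so (using that $\zeta$ is central, so $G_\zeta=G$, as in the proof of \autoref{3})
\[(S_f)^\zeta=\widetilde{\mu}^{-1}(\zeta)/G=\bigl(\mu^{-1}(\zeta)\times\mathbb{R}\bigr)/(G\times\mathbb{Z})=\bigl((\mu^{-1}(\zeta)/G)\times\mathbb{R}\bigr)/\mathbb{Z}=(S^\zeta)_{f^\zeta},\]
where the last equality uses $\pi\circ f=f^\zeta\circ\pi$ for $\pi\colon\mu^{-1}(\zeta)\to S^\zeta$. (Here $\zeta$ is also a regular value of $\mu$ on $S$, since $d\widetilde{\mu}$ restricts to $d\mu$ on the $TS$-directions and annihilates $\partial_t$, so $S^\zeta$ is defined.) Concretely $\Phi([[(p,t)]])=[(\pi(p),t)]$, and it is a diffeomorphism because both sides are the quotient of $\mu^{-1}(\zeta)\times\mathbb{R}$ by the same free and proper $(G\times\mathbb{Z})$-action.

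Next I would compare the coK\"{a}hler data through $\Phi$. Both $\eta$-forms are pulled back from $d\theta$ under the projection to the $S^1=\mathbb{R}/\mathbb{Z}$ factor (on $(S_f)^\zeta$ this follows from $\widetilde{\eta}=\mathrm{pr}^\ast d\theta$ and $\pi_f^\ast\eta^\zeta=\widetilde{\eta}|_{\widetilde{\mu}^{-1}(\zeta)}$), and $\Phi$ respects this $S^1$-factor, so $\Phi^\ast\eta_{(S^\zeta)_{f^\zeta}}=\eta_{(S_f)^\zeta}$ and $\Phi_\ast\xi=\xi$. For $g$ and $\varphi$ I would use the description of the quotient metric and of $\varphi^\zeta$ from the proof of \autoref{100}, the point being to identify the horizontal distribution. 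At $q=[(p,t)]\in\widetilde{\mu}^{-1}(\zeta)$, the induced vector field $A^\ast$ of the $G$-action on $S_f$ equals $(A^\ast_p,0)$ under $T_qS_f=T_pS\oplus\mathbb{R}\partial_t$, so $\mathfrak{g}_q$ lies entirely in the $T_pS$-summand, and $T_q\widetilde{\mu}^{-1}(\zeta)=T_p\mu^{-1}(\zeta)\oplus\mathbb{R}\partial_t$; since $g=\widetilde{h}+\eta\otimes\eta$ with $\eta(\partial_t)=1$, the Reeb field $\xi=\partial_t$ is $g$-orthogonal to $T_pS$, and therefore the horizontal space (the $g$-orthogonal complement of $\mathfrak{g}_q$ inside $T_q\widetilde{\mu}^{-1}(\zeta)$) is $H_q=H_p\oplus\mathbb{R}\xi$, where $H_p\subset T_p\mu^{-1}(\zeta)$ is the horizontal space of the K\"{a}hler reduction $\mu^{-1}(\zeta)\to S^\zeta$. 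On $H_p\oplus\mathbb{R}\xi$ we then have $g=h|_{H_p}\oplus(\text{standard on }\mathbb{R}\xi)$, while $\varphi|_{H_q}(X,a\partial_t)=(JX,0)$ with $JX\in H_p$ (which is forced by $\varphi_q(H_q)=H_q$ from \autoref{-2.0003} together with $\varphi(\xi)=0$). Pushing these forward along $d\pi_f|_{H_q}$ and transporting by $\Phi$ gives exactly $g^\zeta=\widetilde{h^\zeta}+\eta^\zeta\otimes\eta^\zeta$ and $\varphi^\zeta=\widetilde{J^\zeta}$, which is precisely the coK\"{a}hler structure that the mapping torus construction assigns to $(S^\zeta,h^\zeta,J^\zeta)$ and $f^\zeta$. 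Hence $\Phi$ is an isomorphism of coK\"{a}hler manifolds.

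The routine parts are the comparison of $\eta$ and $\xi$ and the final push-forward identities. The step that needs the most care is the identification $H_q=H_p\oplus\mathbb{R}\xi$: one must check that the $G$-orbit directions in $S_f$ have no $\partial_t$-component (immediate from the product form $g\cdot[(p,t)]=[(gp,t)]$ of the action) and that $\xi$ is $g$-orthogonal to them (from $g=\widetilde{h}+\eta\otimes\eta$), together with the bookkeeping that makes the commuting-quotient square an honest diffeomorphism — this is where the $G$-equivariance of $f$ and the centrality of $\zeta$ are used.
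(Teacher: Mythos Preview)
Your proposal is correct and follows essentially the same strategy as the paper: define $\Phi(\widetilde{\pi}[(p,t)])=[(\pi(p),t)]$, identify the horizontal distribution of $\widetilde{\mu}^{-1}(\zeta)\to(S_f)^\zeta$ in terms of that of $\mu^{-1}(\zeta)\to S^\zeta$, and then verify $\eta^\zeta$, $\varphi^\zeta$, $g^\zeta$ piece by piece. The only cosmetic differences are that you establish the diffeomorphism via the commuting $G\times\mathbb{Z}$-action on $\mu^{-1}(\zeta)\times\mathbb{R}$ (the paper checks well-definedness and bijectivity of $\Phi$ directly), and you write the horizontal space explicitly as $H_q=H_p\oplus\mathbb{R}\xi$ whereas the paper encodes the same fact through the inclusion $i_t\colon p\mapsto[(p,t)]$ and the diagram relating $H_p$ to $H'_{[(p,t)]}$.
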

\begin{proof}
    There is a diffeomorphism $\Phi:(S_f)^\zeta\to(S^\zeta)_{f^\zeta}$ defined by
    $\Phi(\widetilde{\pi}[(p,t)])=[(\pi(p),t)]$, where $\widetilde{\pi}:\widetilde{\mu}^{-1}(\zeta)\to (S_f)^\zeta$ is the projection of reduction. Since $\widetilde{\pi}[(p,t)]=\widetilde{\pi}[(q,s)]$ is equivalent to $[(\pi(p),t)]=[(\pi(q),s)]$, the map $\Phi$ is well-defined and one-to-one. Then the diagram
\[\xymatrix{{\mu}^{-1}(\zeta)\ar[d]_-{{i_t}}\ar[r]^-{{\pi}}&\mu^{-1}(\zeta)/G\ar[d]^-{{i_t}}\\
\widetilde{\mu}^{-1}(\zeta)\ar[r]_-{\widetilde{\pi}}&(S_f)^\zeta
    }\]
commutes, where $i_t$ denotes natural inclusions to mapping tori $p\mapsto[(p,t)]$. We orthogonally decompose $T_p\mu^{-1}(\zeta)$ and $T_{[(p,t)]}\widetilde{\mu}^{-1}(\zeta)$ as 
\[T_p\mu^{-1}(\zeta)=H_p\oplus\mathfrak{g}_p,\]
\[T_{[(p,t)]}\widetilde{\mu}^{-1}(\zeta)=H'_{[(p,t)]}\oplus\mathfrak{g}_{[(p,t)]},\]
respectively. Then we can easily check that $(i_t)_\ast(\mathfrak{g}_p)=\mathfrak{g}_{[(p,t)]}$ and thus obtain the following commutative diagram.
\[\xymatrix{H_{p}\ar[d]_-{(i_t)_\ast}\ar[r]^-{d{\pi}}_-{\simeq}&T_{\pi(p)}S^\zeta\ar[d]^-{(i_t)_\ast}\\
H'_{[(p,t)]}\ar[r]_-{d\widetilde{\pi}}^-{\simeq}&T_{\widetilde{\pi}[(p,t)]}(S_f)^\zeta
}\]

From $\varphi=\widetilde{J}$ and the diagram above, for $X\in T_{\pi(p)}S^\zeta$
\[\begin{split}
    \varphi^\zeta((i_t)_\ast X)&=d\widetilde{\pi}(\varphi\widetilde{((i_t)_\ast X)})\\
    &=d\widetilde{\pi}(\varphi((i_t)_\ast\widetilde{X}))\\
    &=d\widetilde{\pi}((i_t)_\ast J(\widetilde{X}))\\
    &=(i_t)_\ast d\pi(J(\widetilde{X}))=(i_t)_\ast J^\zeta(X)
\end{split}\]
holds, and thus $\varphi^\zeta=\widetilde{J^\zeta}$. Similarly we have $\widetilde{h}(\widetilde{X},\widetilde{Y})=\widetilde{h^\zeta}(X,Y)$ for $X,Y\in T_{\widetilde{\pi}[(p,t)]}(S_f)^\zeta$, hence we obtain
\[\begin{split}
    g^\zeta(X,Y)&=g(\widetilde{X},\widetilde{Y})=\widetilde{h}(\widetilde{X},\widetilde{Y})+\eta(\widetilde{X})\eta(\widetilde{Y})\\
   &=\widetilde{h^\zeta}(X,Y)+\eta^\zeta(X)\eta^\zeta(Y).
\end{split}\]

Let $\textrm{pr}_1:S_f\to S^1$ and $\textrm{pr}_2:(S^\zeta)_{f^\zeta}\to S^1$ be natural projections. Then the diagram
\[\xymatrix{
\widetilde{\mu}^{-1}(\zeta)\ar[rd]_-{\textrm{pr}_1}\ar[rr]^-{\widetilde{\pi}}&&(S_f)^\zeta\ar[ld]^-{\textrm{pr}_2} \\
&S^1&
    }\]
commutes, and thus we have $\widetilde{\pi}^\ast\textrm{pr}_2^\ast d\theta=\textrm{pr}_1^\ast d\theta=\eta|_{\widetilde{\mu}^{-1}(\zeta)}$. Hence $\textrm{pr}_2^\ast d\theta=\eta^\zeta$ holds.

From the above, the induced 2-form $\omega^\zeta$ and the Reeb vector field $\xi^\zeta$ on $(S_f)^\zeta$ are the same as those on $(S^\zeta)_{f^\zeta}$, and thus the cok\"{a}hler structure on $(S_f)^\zeta$ coincides with that of $(S^\zeta)_{f^\zeta}$.
\end{proof}

\section{A Perspective from Dynamical Systems}\label{dynamical}

In this section, we interpret our coK\"{a}hler reduction theorem from the viewpoint of dynamical systems. First, we explain why cosymplectic manifolds describe time-dependent Hamiltonian systems.

Let $(M,\omega)$ be a $2n$-dimensional symplectic manifold and $H\in C^\infty(M\times\mathbb{R})$. Then we consider a cosymplectic manifold $(M\times\mathbb{R},\ \eta:= d t,\ \omega_H:=\omega+ d H\wedge d t)$, where $t$ is the coordinate of $\mathbb{R}$. Using Darboux coordinates $(p_i,q_i)$ of $(M,\omega)$, the Reeb vector field $\xi_H$ of $(M\times\mathbb{R},\eta,\omega_H)$ is written as
\[\xi_H=\displaystyle\sum_{i=1}^{n}\Bigl(\displaystyle\frac{\partial H}{\partial p_i}\displaystyle\frac{\partial}{\partial q_i}-\displaystyle\frac{\partial H}{\partial q_i}\displaystyle\frac{\partial}{\partial p_i}\Bigr)+\displaystyle\frac{\partial}{\partial t}\]
and its integral curves are controlled by an ODE system
\begin{equation}\label{eq:5}
    \dot{q_i}=\displaystyle\frac{\partial H}{\partial p_i},\qquad\dot{p_i}=-\displaystyle\frac{\partial H}{\partial q_i},\qquad\dot{t}=1.
\end{equation}
Therefore we can consider $t$ as a time-parameter which parameterize ordinary Hamiltonian systems. Any cosymplectic manifold is locally the product of a symplectic manifold with the real line, so cosymplectic manifolds provide a good framework for time-dependent Hamiltonian systems.

\begin{remark}
    The Reeb vector field $\xi_H$ of $(M\times\mathbb{R},\eta,\omega_H)$ coincides with the vector field $\frac{\partial}{\partial t}+X_H$, where $X_H$ is the Hamiltonian vector field with respect to the cosymplectic structure $(\eta,\omega)$. The vector field $\frac{\partial}{\partial t}+X_H$ is called the \textit{evolution vector field} of the time-dependent system.$\End$
\end{remark}

For any almost coK\"{a}hler manifold $(M,g,\varphi,\xi,\eta)$, the following is known.
\begin{lemma}\label{lem:5}
   Integral curves of $\xi$ are geodesics with respect to $g$.
\end{lemma}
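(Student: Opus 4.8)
The plan is to show $\nabla_\xi\xi=0$ where $\nabla$ is the Levi-Civita connection of $g$, using the characterization of coK\"{a}hler structures recalled in the proof of \autoref{100}, namely that $\nabla\varphi=0$. The key structural facts I would use are: $\varphi\xi=0$ (from $\varphi^2=-\mathrm{id}+\eta\otimes\xi$ applied to $\xi$, since $\varphi\xi$ lies in $\mathrm{Ker}\,\eta$ and $\varphi^2\xi=0$), $\eta=\flat(\xi)$ so that $g(\xi,-)=\eta$, and $\nabla g=0$. Since $\eta$ is closed and $\eta(\xi)=1$, one also gets $\mathcal{L}_\xi\eta=d\iota_\xi\eta+\iota_\xi d\eta=0$.

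First I would compute, for an arbitrary vector field $X$, the quantity $g(\nabla_\xi\xi,X)$ and show it vanishes. Using $\nabla\varphi=0$ we have $\nabla_\xi(\varphi\xi)=\varphi(\nabla_\xi\xi)$; but $\varphi\xi=0$, so $\varphi(\nabla_\xi\xi)=0$, which forces $\nabla_\xi\xi\in\mathrm{Ker}\,\varphi=\mathbb{R}\langle\xi\rangle$ (here I use that $\varphi$ restricted to $\mathrm{Ker}\,\eta$ is injective, as noted in the excerpt). Hence $\nabla_\xi\xi=c\,\xi$ for some function $c$. To see $c=0$, pair with $\xi$ itself: $c=c\,g(\xi,\xi)=g(\nabla_\xi\xi,\xi)=\tfrac12\,\xi\bigl(g(\xi,\xi)\bigr)=\tfrac12\,\xi(1)=0$, using metric-compatibility and $g(\xi,\xi)=\eta(\xi)=1$. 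Therefore $\nabla_\xi\xi=0$, i.e.\ the integral curves of $\xi$ are geodesics.

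The main obstacle — really the only subtlety — is justifying $\varphi\xi=0$ and $\mathrm{Ker}\,\varphi_p=\mathbb{R}\langle\xi_p\rangle$ cleanly: from $\varphi^2=-\mathrm{id}+\eta\otimes\xi$ we get $\varphi^2\xi=-\xi+\xi=0$, and since $\varphi$ maps into $\mathrm{Ker}\,\eta$ on which $\varphi^2=-\mathrm{id}$ is invertible, $\varphi\xi=0$; conversely if $\varphi v=0$ then $0=\varphi^2 v=-v+\eta(v)\xi$, so $v=\eta(v)\xi\in\mathbb{R}\langle\xi\rangle$. This is routine almost-contact algebra, so the whole argument is short once $\nabla\varphi=0$ is invoked. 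An alternative, equally quick route avoiding the kernel discussion: differentiate $g(\xi,\xi)=1$ to get $g(\nabla_X\xi,\xi)=0$ for all $X$, and differentiate $\eta(Y)=g(\xi,Y)$ together with $\mathcal{L}_\xi\eta=0$ and the torsion-free condition $\nabla_\xi Y-\nabla_Y\xi=[\xi,Y]$ to conclude $g(\nabla_\xi\xi,Y)=0$ for all $Y$; I would present whichever turns out shorter, but I expect the $\nabla\varphi=0$ argument above to be the cleanest.
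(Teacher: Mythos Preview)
Your primary argument relies on $\nabla\varphi=0$, but the lemma is stated for \emph{almost} coK\"{a}hler manifolds, and the condition $\nabla\varphi=0$ is precisely the characterization of the full coK\"{a}hler case (as recalled in the proof of \autoref{100}, citing Blair). An almost coK\"{a}hler structure only requires $d\eta=0$ and $d\omega=0$; it need not satisfy $\nabla\varphi=0$. So your main line of reasoning proves strictly less than the lemma asserts.

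Your alternative route, mentioned only as a backup, is in fact the paper's own proof and does work in the almost coK\"{a}hler generality, since it uses only $d\eta=0$, the relation $\eta=g(\xi,-)$, torsion-freeness, and metric compatibility. Concretely: from $g(\xi,\xi)=1$ one gets $g(\nabla_X\xi,\xi)=0$ for all $X$; then expanding
\[
0=2\,d\eta(\xi,X)=\xi(\eta(X))-\eta([\xi,X])=\xi(g(X,\xi))-g(\nabla_\xi X-\nabla_X\xi,\xi)=g(X,\nabla_\xi\xi)
\]
yields $\nabla_\xi\xi=0$. You should promote this to the main argument and drop the $\nabla\varphi=0$ approach.
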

\begin{proof}
Let $\nabla$ be the Levi-Civita connection with respect to $g$. Then for any $X\in\mathfrak{X}(M)$,\ $g(\nabla_X\xi,\xi)=0$ holds since $X(g(\xi,\xi))=g(\nabla_X\xi,\xi)+g(\xi,\nabla_X\xi)$. Hence
\[\begin{split}
    0&=2 d\eta(\xi,X)=\xi(\eta(X))-\eta([\xi,X])\\
    &=\xi(g(X,\xi))-g(\nabla_\xi X-\nabla_X\xi,\xi)\\
    &=\xi(g(X,\xi))-g(\nabla_\xi X,\xi)+g(\nabla_X\xi,\xi)\\
    &=\xi(g(X,\xi))-g(\nabla_\xi X,\xi)\\
    &=g(X,\nabla_\xi\xi)\\
\end{split}\]
holds, and thus we obtain $\nabla_\xi\xi=0$.
\end{proof}

From \autoref{lem:5} and \autoref{100}, we immediately obtain the following.
\begin{proposition}
    Suppose that the cosymplectic manifold $(M\times\mathbb{R},\eta,\omega_H)$ admits a coK\"{a}hler structure and a Hamiltonian action of a Lie group which preserves the coK\"{a}hler structure. Then the image of a solution of (\ref{eq:5}) by the projection of reduction is a geodesic with respect to the reduced metric on $M^\zeta$.\hfill{$\Box$}
\end{proposition}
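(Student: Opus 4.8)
The plan is to combine the two ingredients cited in the statement: Lemma~\ref{lem:5}, which tells us that the Reeb vector field of an almost coK\"ahler manifold has geodesic integral curves, and Theorem~\ref{100}, which tells us that the coK\"ahler quotient carries a coK\"ahler structure whose Reeb field $\xi^\zeta$ is the pushforward of $\xi|_{\mu^{-1}(\zeta)}$ under the projection $\pi$. First I would recall the setup precisely: the cosymplectic manifold is $(M\times\mathbb{R},\eta,\omega_H)$ with an assumed compatible coK\"ahler structure $(g,\varphi,\xi_H,\eta)$, and by the remark preceding the statement the Reeb vector field of this cosymplectic structure equals the evolution vector field $\frac{\partial}{\partial t}+X_H$, whose integral curves are exactly the solutions of the ODE system~(\ref{eq:5}). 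So a solution of~(\ref{eq:5}) is, up to reparametrization, an integral curve of $\xi_H$.

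The key steps, in order, are then as follows. (i) A solution $\gamma(s)$ of~(\ref{eq:5}) lies in the level set $\mu^{-1}(\zeta)$ provided its initial point does; this is because the third condition in the definition of a moment map gives $d\mu_H^A(\xi_H)=0$ for every $A\in\mathfrak{g}$ (the Reeb field is tangent to all level sets of the moment map), so $\mu_H$ is constant along $\gamma$. (ii) Since $\xi_H$ is $G$-invariant and tangent to $\mu^{-1}(\zeta)$, it projects under $\pi:\mu^{-1}(\zeta)\to M^\zeta$ to the Reeb vector field $\xi^\zeta$ of the reduced coK\"ahler structure; this is exactly the construction of $\xi^\zeta$ in the proof of Theorem~\ref{-3} and Theorem~\ref{100}. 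Hence $\pi\circ\gamma$ is an integral curve of $\xi^\zeta$. (iii) Apply Lemma~\ref{lem:5} to the reduced almost coK\"ahler manifold $(M^\zeta,g^\zeta,\varphi^\zeta,\xi^\zeta,\eta^\zeta)$: its Reeb vector field has geodesic integral curves with respect to $g^\zeta$. Therefore $\pi\circ\gamma$ is a geodesic for the reduced metric, which is the assertion.

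I would present this as a short argument: solutions of~(\ref{eq:5}) are integral curves of $\xi_H$; these stay in $\mu_H^{-1}(\zeta)$ by the defining property of the moment map; $\pi$ carries them to integral curves of $\xi^\zeta$; and Lemma~\ref{lem:5} finishes it. There is no real obstacle here — the proposition is genuinely a one-line consequence once the two cited results are in place; the only point requiring a moment of care is checking that the \emph{unparametrized} image of the solution is what is mapped to a geodesic, since a priori ``geodesic'' could be parametrization-dependent, but in fact the parametrization of an integral curve of a unit-length (indeed $\eta^\zeta(\xi^\zeta)=1$) Reeb field is an affine parameter, so the statement holds with the natural parametrization as well. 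I would phrase the proof to make the tangency-to-level-set step explicit, since that is the only place where the hypotheses on the action are actually invoked, and otherwise keep it to two or three sentences.
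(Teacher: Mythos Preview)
Your proposal is correct and matches the paper's approach exactly: the paper treats the proposition as an immediate consequence of \autoref{lem:5} and \autoref{100} without further argument, and your write-up simply makes explicit the three steps (integral curves of $\xi_H$ are solutions of~(\ref{eq:5}), they stay in $\mu^{-1}(\zeta)$ and project to integral curves of $\xi^\zeta$, and \autoref{lem:5} applies on the quotient) that the paper leaves implicit. The only embellishment you add is the remark about parametrization, which is harmless and arguably clarifying.
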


\begin{example}[cf.\cite{albert1989theoreme}]
    The motion of a solid in $\mathbb{R}^3$ with a fixed point (its center of inertia) is described by a manifold $T^\ast SO(3)$ equipped with the canonical symplectic form on $T^\ast SO(3)$ and a Hamiltonian $H:T^\ast SO(3)\simeq SO(3)\times\mathfrak{so}(3)^\ast\to\mathbb{R}$ defined by
    \[H(A,\alpha)=\displaystyle\sum_{1\leq i\leq3}\frac{\alpha_i^2}{M_i},\]
    where $\alpha_1,\alpha_2,\alpha_3$ are coefficients with respect to a suitable basis of $\mathfrak{so}(3)^\ast$ and $M_1,M_2,M_3$ are coefficients of the ellipsoid of inertia of the solid.

    Then the associated time-dependent Hamiltonian system is given by a cosymplectic manifold $(T^\ast SO(3)\times\mathbb{R},\ \eta:= d t,\ \omega_H:=\omega_{T^\ast SO(3)}+ d H\wedge d t)$. We define an action of $ SO (3)$ on $T^\ast SO(3)\times\mathbb{R}$ by
    \[B\cdot(A,\alpha,t)=(BA,\alpha,t)\]
    where $A,B\in SO(3),\ \alpha\in\mathfrak{so}(3)^\ast,\ t\in\mathbb{R}$. This action is a Hamiltonian action whose moment map is given by $\mu(A,\alpha,t)=\textrm{Ad}^\ast_A\alpha$. Then the reduced cosymplectic manifold at any non-zero vector $\zeta\in\mathfrak{so}(3)^\ast$ is $(\mathbb{S}^2\times\mathbb{R},\  d t,\ \omega_{\mathbb{S}^2}+ d H\wedge d t)$, where $\omega_{\mathbb{S}^2}$ is the standard symplectic form on $\mathbb{S}^2$ (note that the Hamiltonian $H$ is invariant by the action of $ SO(3)$ and thus descend to the quotient).
    
    Since $ SO(3)$ is compact, we can naturally construct a K\"{a}hler structure $(h,I)$ on $T^\ast SO(3)$ which is compatible with the canonical symplectic form $\omega_{T^\ast SO(3)}$. Then $T^\ast SO(3)\times\mathbb{R}$ is endowed with a $ SO(3)$-invariant almost coK\"{a}hler structure $(g,\varphi,\xi_H, d t)$ defined by 
    \[g(X,Y)=h(X,Y),\qquad g(X,\xi_H)=0,\qquad g(\xi_H,\xi_H)=1\]

    \[\varphi(X)=I(X),\qquad \varphi(\xi_H)=0\]
    for any $X,Y\in\ker d t=TM$. Since the Levi-Civita connection $\nabla$ of $g$ satisfies $\nabla\varphi=0$, $(g,\varphi,\xi,\eta)$ is a coK\"{a}hler structure, and the projection of time-dependent flows onto $\mathbb{S}^2\times\mathbb{R}$ are geodesics with respect to the reduced coK\"{a}hler metric.$\End$
\end{example}

\section{Conclusion}

In this paper, we proved reduction theorems for Hamiltonian actions on coK\"{a}hler manifolds and 3-cosymplectic manifolds, which are the polar opposites of Sasakian manifolds and 3-Sasakian manifolds, respectively. A notion of moment maps can also be defined for Lie group actions on contact manifolds \cite{albert1989theoreme,geiges1997constructions,willett2002contact}. However, the situation is quite different from that on symplectic manifolds, such as the moment map being uniquely determined by the action. On the other hand, Hamiltonian actions on cosymplectic manifolds have properties that are very similar to those on symplectic manifolds, and therefore, the results in this paper are natural odd-dimensional analogues of the reduction theorems by Hitchin {\it et al} \cite{hitchin1987hyperkahler}.

A future problem is to find further concrete examples of coK\"{a}hler reduction and 3-cosymplectic reduction. In addition, possible extensions of Albert's theory of Hamiltonian actions on cosymplectic manifolds, in directions different from those in this paper, include demonstrating a reduction theorem for actions of cosymplectic groupoids \cite{fernandes2023cosymplectic} and considering deformations of cosymplectic manifolds using moment maps \cite{nakamura2018deformations}.

\vspace{0.2in}
\bibliography{hoge}

\begin{thebibliography}{CMDNY13b}

\bibitem[Alb89]{albert1989theoreme}
C.~Albert.
\newblock Le th{\'e}oreme de r{\'e}duction de {Marsden}-{Weinstein} en
  g{\'e}om{\'e}trie cosymplectique et de contact.
\newblock {\em Journal of Geometry and Physics}, 6(4):627--649, 1989.

\bibitem[BGM94]{boyer1994geometry}
C.P. Boyer, K.~Galicki, and B.M. Mann.
\newblock The geometry and topology of {3-Sasakian} manifolds.
\newblock {\em Journal f\"{u}r die reine und angewandte Mathematik},
  455:183--220, 1994.

\bibitem[Bla67]{blair1966theory}
D.E. Blair.
\newblock The theory of quasi-{Sasakian} structures.
\newblock {\em Journal of Differential Geometry}, 1:331--345, 1967.

\bibitem[CMDNY13a]{cappelletti2013survey}
B.~Cappelletti-Montano, A.~De~Nicola, and I.~Yudin.
\newblock A survey on cosymplectic geometry.
\newblock {\em Reviews in Mathematical Physics}, 25(10):1343002, 2013.

\bibitem[CMDNY13b]{montano2013topology}
B.~Cappelletti-Montano, A.~De~Nicola, and I.~Yudin.
\newblock Topology of 3-cosymplectic manifolds.
\newblock {\em Quarterly journal of mathematics}, 64(1):59--82, 2013.

\bibitem[FIP04]{pastore2004riemannian}
M.~Falcitelli, S.~Ianus, and A.M. Pastore.
\newblock {\em Riemannian submersions and related topics}.
\newblock World Scientific, 2004.

\bibitem[FM74]{fujimoto1974cosymplectic}
A.~Fujimoto and H.~Muto.
\newblock On cosymplectic manifolds.
\newblock {\em Tensor}, 28(1):43--52, 1974.

\bibitem[FP23]{fernandes2023cosymplectic}
R.L. Fernandes and D.I. Ponte.
\newblock Cosymplectic groupoids.
\newblock {\em Journal of Geometry and Physics}, 192:104928, 2023.

\bibitem[Gei97]{geiges1997constructions}
H.~Geiges.
\newblock Constructions of contact manifolds.
\newblock In {\em Mathematical Proceedings of the Cambridge Philosophical
  Society}, volume 121, pages 455--464. Cambridge University Press, 1997.

\bibitem[GO01]{grantcharov2001reduction}
G.~Grantcharov and L.~Ornea.
\newblock Reduction of {Sasakian} manifolds.
\newblock {\em Journal of Mathematical Physics}, 42(8):3809--3816, 2001.

\bibitem[HKLR87]{hitchin1987hyperkahler}
N.J. Hitchin, A.~Karlhede, U.~Lindstr{\"o}m, and M.~Ro{\v{c}}ek.
\newblock Hyperk{\"a}hler metrics and supersymmetry.
\newblock {\em Communications in Mathematical Physics}, 108(4):535--589, 1987.

\bibitem[Ito77]{itoh1977fixed}
M.~Itoh.
\newblock A fixed point theorem for {K{\"a}hler} manifolds with positive
  holomorphic sectional curvature.
\newblock {\em Science Reports of the Tokyo Kyoiku Daigaku, Section A},
  13(366/382):313--317, 1977.

\bibitem[Li08]{li2008topology}
H.~Li.
\newblock Topology of co-symplectic/co-{K{\"a}hler} manifolds.
\newblock {\em Asian Journal of Mathematics}, 12(4):527--543, 2008.

\bibitem[Lud70]{ludden1970submanifolds}
G.D. Ludden.
\newblock Submanifolds of cosymplectic manifolds.
\newblock {\em Journal of Differential Geometry}, 4(2):237--244, 1970.

\bibitem[MW74]{marsden1974reduction}
J.~Marsden and A.~Weinstein.
\newblock Reduction of symplectic manifolds with symmetry.
\newblock {\em Reports on mathematical physics}, 5(1):121--130, 1974.

\bibitem[Nak18]{nakamura2018deformations}
T.~Nakamura.
\newblock Deformations of symplectic structures by moment maps.
\newblock {\em Journal of Geometry and Symmetry in Physics}, 47:63--84, 2018.

\bibitem[Wil02]{willett2002contact}
C.~Willett.
\newblock Contact reduction.
\newblock {\em Transactions of the American Mathematical Society},
  354(10):4245--4260, 2002.

\bibitem[Yol21]{yoldas2021some}
H.~Yolda\c{s}.
\newblock Some results on cosymplectic manifolds admitting certain vector
  fields.
\newblock {\em Journal of Geometry and Symmetry in Physics}, 60:83--94, 2021.

\bibitem[ZZ06]{zambon2006contact}
M.~Zambon and C.~Zhu.
\newblock Contact reduction and groupoid actions.
\newblock {\em Transactions of the American Mathematical Society},
  358(3):1365--1401, 2006.

\end{thebibliography}
\bibliographystyle{alpha}

\end{document}